\newtheorem{tm}{Theorem}[section]    
\newtheorem{lemma}[tm]{Lemma}
\newtheorem{prop}[tm]{Proposition}
\newtheorem{cor}[tm]{Corollary}
\theoremstyle{definition}
\newtheorem{df}{Definition}[section]
\newtheorem{rem}{Remark}[section]
\newcommand{\beqa}{\begin{eqnarray*}}
\newcommand{\eeqa}{\end{eqnarray*}}
\DeclareMathOperator*{\supp}{supp}
\def\cO{\mathcal{ O}}
\def\<{\left<}
\def\>{\right>}
\def\mv1{M_v^1}
\newcommand{\EMN}{Erd\'elyi-Magnus-Nevai}
\newcommand{\N}{\ensuremath{\mathbb{N}}}
\newcommand{\mz}{Marcinkiewicz-Zygmund}
\newcommand{\sd}{\mathbb{S}^d}
\renewcommand{\S}{\ensuremath{\mathbb{S}}}
\newcommand{\mut}{\mu _t}
\newcommand{\pit}{\Pi _t}
\newcommand{\dimp}{\mathrm{dim}\, \pit}
\newcommand{\ty}{^t_y}
\DeclareMathOperator{\vol}{vol}
\DeclareMathOperator{\diam}{diam}
\begin{document}
\begin{abstract}
We derive quantitative volume constraints for sampling measures $\mu_t$ on the unit sphere $\S^d$ that satisfy \mz\ inequalities of order $t$. Using precise localization estimates for Jacobi polynomials, 
we obtain explicit upper and lower bounds on the $\mu_t$-mass of
geodesic balls at the natural scale $t^{-1}$. Whereas  constants are
typically left implicit in the literature,  we place special emphasis
on fully explicit constants,  and the results are   genuinely quantitative.  Moreover, these bounds yield quantitative constraints for the $s$-dimensional Hausdorff volume of \mz\ sampling sets and, in particular, optimal lower bounds for the length of \mz\ curves.
\end{abstract}

\title{Quantitative Constraints for Stable Sampling on the Sphere}
\author{Martin Ehler}
\address{Faculty of Mathematics \\
University of Vienna \\
Oskar-Morgenstern-Platz 1 \\
A-1090 Vienna, Austria}
\email{martin.ehler@univie.ac.at}

\author{Karlheinz Gr\"ochenig}
\address{Faculty of Mathematics \\
University of Vienna \\
Oskar-Morgenstern-Platz 1 \\
A-1090 Vienna, Austria}
\email{karlheinz.groechenig@univie.ac.at}
\subjclass[2010]{}
\keywords{}

\maketitle

\section{Introduction}
\mz\ inequalities are often referred to as ``sampling inequalities'' and form the foundation of
stable sampling, see \cite{Groechenig2020,OrtegaSaludes} and the references therein. They appear, sometimes under different names, in several areas of mathematics. In approximation theory, for instance, they are also known as approximate quadrature rules or approximate $t$-designs, and are special types of frame inequalities. In complex analysis these conditions correspond to Logvinenko-Sereda sets and Carleson measures \cite{OrtegaPridhnani2013}.

Necessary conditions for \mz\ inequalities have been known for some time on 
the torus and the sphere \cite{Marzo2007,MNW01,Reimer}, and more generally on 
Riemannian manifolds \cite{OrtegaPridhnani2012,OrtegaPridhnani2013} and on metric measure spaces 
\cite{FilbirMhaskar2011,Mhaskar2018}. However, these results are
typically qualitative, and  
the relevant constants are either left 
unspecified \cite{Brauchart2015,FilbirMhaskar2011,OrtegaPridhnani2013} or do not appear in density formulations \cite{FuehrAND,Groechenig2017,Marzo2007,OrtegaPridhnani2012}. For numerical work, explicit estimates are essential, especially on the $d$-sphere, where the dependence on the dimension $d$ plays a critical role.

Several quantitative \emph{sufficient} conditions for \mz\ inequalities on the sphere are known \cite{FHJU, Filbir:Themi,Lindblad3,MarzoPridhnani}. 
In contrast, the situation for necessary conditions is different, and the quantitative picture has remained incomplete. 

In the present work we complete this picture by providing fully quantitative
estimates on the sphere, with constants depending explicitly on the
dimension. Although the results build on classical ideas and are informed by the qualitative density framework developed in \cite{Marzo2007,marzo08}, the
derivation of  explicit bounds is a genuine difficulty, since many standard arguments suppress constants at crucial steps. Moreover, the quantitative constraints for curves and general $s$-dimensional sampling sets constitute a key new contribution that goes well beyond the classical point-sampling framework.

To state our results precisely, we now introduce the necessary technical framework.

\subsubsection*{Sampling inequalities}
Let $\Pi_t$ be the $\binom{t+d}{t}+\binom{t-1+d}{t-1}$-dimensional space 
of spherical harmonics of degree at most $t$. Given a finite Borel measure $\mu_t$ on $\S^d$, the lower and upper 
\mz\ inequalities of order $t$ (or for $\Pi_t$) correspond to the two-sided bound
\begin{equation}\label{eq:def mz}
a \|f\|_{L^2(\S^d)}^2 
   \stackrel{\text{lower}}{\le} \int_{\S^d} |f(x)|^2 \, d\mu_t(x)
   \stackrel{\text{upper}}{\le} A \|f\|_{L^2(\S^d)}^2,
   \qquad f \in \Pi_t .
\end{equation}
Such inequalities guarantee that $\mu_t$ samples $\Pi_t$ in a stable manner 
\cite{MNW01}, with stability constants controlled by $a$ and~$A$. In
relevant applied  
scenarios, the function values of $f$ are collected either  (i)  on  a finite
set,  so that $\mu_t$ is a 
discrete measure, or (ii) $\supp \mu_t$ is an  $s$-dimensional set with $0<s<d$. 

\subsubsection*{Curves}
Disregarding pathologies, one-dimensional subsets of $\sd $ are
curves, and sampling  along curves is referred to as  mobile sampling
on the sphere. It  has emerged as an alternative to 
point-based quadrature rules, where information is collected along a moving 
sensor trajectory rather than at discrete locations 
\cite{Ehler2021,Groechenig2015,Hastie,Lee,Ilya23,Vetterli1,Vetterli2}. 
Such curves arise naturally in scanning and sensing applications, including 
satellite paths, drone sweeps, and tomographic scanning. In this setting, the 
integral $\int_{\sd} |f(x)|^2\, d\mu_t(x)$ in \eqref{eq:def mz} becomes a 
normalized line integral along the curve $\gamma_t$ with a weight function $w_t$,
\begin{equation*}
\int_{\S^d} |f(x)|^2 \, d\mu_t(x)=\frac{1}{\ell(\gamma_t)}\int_{\gamma_t} |f|^2 w_t\,,
\end{equation*}
where $\ell(\gamma_t)$ denotes the arc-length of $\gamma _t$. 
In the special case $a=A=1$ and $w_t \equiv 1$, we speak of
spherical $t$-design curves \cite{EG:2023,Lindblad1,Yang2026} and the \mz\
inequality \eqref{eq:def mz} is  equivalent to a quadrature rule on the sphere. 

As shown in our earlier work \cite{EG:2023}, the length of a spherical $t$-design curve on 
$\S^d$ must satisfy\footnote{We write
  $f \lesssim g$ if 
  $f \leq Cg$. If $\lesssim$ and $\gtrsim$ both hold, then we
  write $\asymp$. In some case it is more convenient to write $f = \cO
  (g)$ for $f\lesssim g$, and we will use this convention, too. Other communities would use $\Omega$ and $\Theta$, respectively.} 
\begin{equation}\label{eq:l t design}
\ell(\gamma_t) \gtrsim t^{d-1}\,.
\end{equation}
We note that $\dim\Pi_t\asymp t^d$, so that the bound also says $\ell(\gamma_t) \gtrsim \dimp/t$.  
This estimate  raises the question of whether we can determine
explicit constants and whether comparable constraints  also 
hold for general \mz\ curves and for higher-dimensional sampling sets. 

\subsubsection*{Questions}
We are guided by two questions, and although \eqref{eq:l t design}
provides initial motivation, we begin with more general questions:
\begin{enumerate}
    \item[(Q.1):\,] 
    How concentrated can a \mz\ sampling measure $\mu_t$ of order $t$
    be at the resolution scale $t^{-1}$?
    In other words, what is the maximal possible $\mu_t$-mass of a
    spherical cap  of radius $\mathcal{O}(t^{-1})$?

    \item[(Q.2):\,]
    What volume obstructions do \mz\ inequalities impose on sampling sets
    of fixed Hausdorff dimension?
    In particular, what are the minimal $s$-dimensional volumes and, in
    the case $s=1$, the minimal lengths of \mz\ sampling sets of order $t$?
\end{enumerate}

\subsubsection*{Answers}
While qualitative answers to these questions are known, we will derive
precise,  quantitative estimates to   both  
questions with explicit constants. In particular, we show that the qualitative length 
constraint \eqref{eq:l t design}  for spherical $t$-design curves
extends to general  \mz\ families of measures. For the first time,  we obtain quantitative bounds for $s$-dimensional sampling sets. More specifically, we provide the following answers:
\begin{itemize}
\item[(A.1)] 
Every measure $\mu_t$ 
satisfying the \emph{upper} \mz\ inequality cannot assign too much mass to a 
single spherical cap $B_r$ of radius $r$ at scale $t^{-1}$,
\begin{equation*}
\mu_t(B_r) \leq C_1 \frac{A}{\dimp}\,\qquad \text{for } r\leq
C_2t^{-1}\, .
\end{equation*}
The  constants $C_1=C_1(d),\,C_2=C_2(d)$  depend only on the dimension
$d$ and will be made explicit in 
Theorem \ref{mutmeas}. In particular, on the sphere  this provides a quantitative
counterpart  to the generic estimates in \cite[Theorem 5.5 (a)]{FilbirMhaskar2011}. 

We recommend the reader to consult the work of Marzo and
Ortega-Cerd\`a~\cite{marzo08} where measures satisfying the upper \mz\
inequality  are characterized
by their relative density. While the proofs and results are definitely related, see also \cite{Marzo2007},
our focus is on the explicit constants. Note that these authors also
call such measures  Logvinenko-Sereda measures or speak of a Carleson
family.

\item[(A.2a)] 
If $(\gamma_t,w_t)$ is a weighted \mz\ curve and $t\geq d\geq 2$, then its length is lower bounded by
\[
\ell(\gamma_t)\geq  \frac{\dimp}{t} \,\frac{\sqrt{d+2}}{4}\,
\Big(\frac{a}{A}\Big)^{1+\frac{1}{2t}}\,.
\]
Since  $\frac{\dimp}{t}\asymp t^{d-1}$, this estimate also answers a
question of Lindblad~\cite[Sec.~6]{Lindblad2} whether the lower bounds
on the length of $t$-design curves extend to approximate design
curves. Our result not only  matches \eqref{eq:l t design}, but  
makes the involved constants and their dependence of the \mz\
constants and the dimension explicit.

\item[(A.2b)]
Let $X_t\subseteq \S^d$ be an $s$-dimensional subset and $w_t$
its associated nonnegative 
weight function. Assume that  the upper inequality in \eqref{eq:def mz} 
holds with the integral replaced by 
\[
\int_{\S^d} |f(x)|^2 \, d\mu_t(x)=\frac{1}{h_{s}(X_t)}\int_{X_t} |f(x)|^2 w_t \,d h_{s}(x)\,,
\]
where $h_s$ is the $s$-dimensional Hausdorff measure. If $(X_t,w_t)$ satisfies 
a mild Ahlfors regularity condition in a single point, then the Hausdorff volume must satisfy
\[
h_{s}(X_t) \geq c \, \frac{\dimp}{t^{s}}\,.
\]
The constant $c$ is specified in Theorem \ref{tm:s-dim}. For point sets, the regularity condition is automatically satisfied, and for curves it is guaranteed by the lower \mz\ inequality, cf.~Remarks \ref{rem:point zero} and \ref{rem:curve and Alhfors}.
\end{itemize}

Whereas (A.1) gives a quantitative answer to (Q.1), (A.2) resolves
(Q.2)  for curves 
and, even more generally, for sets of arbitrary  Hausdorff dimension
$s$. Since $\frac{\dimp}{t^{s}}\asymp t^{d-s}$, it extends \eqref{eq:l t design} 
to the full range of dimensions $s$. Moreover, (A.2a)  is optimal up to a dimension-dependent constant: there exist \mz\ curves $\gamma_t$, whose lengths satisfy $\ell(\gamma_t)\asymp t^{d-1}$, see \cite{EGK:24,Lindblad2}.

\subsubsection*{Proofs via quantitative localization}
We remark that the
original proof of \eqref{eq:l t design} in~\cite{EG:2023} relies on the exactness of the
quadrature rule and does not extend to \mz\ measures. In this work we
develop a new approach whose  key ingredient  is a quantitative
localization principle  
of reproducing kernels for spherical harmonics at the natural scale
$t^{-1}$, which is  obtained from explicit estimates of 
Jacobi polynomials. Using these well-localized kernels as test 
functions in $\Pi_t$, we derive the upper volume bound (A.1). The
proofs of (A.2a) and (A.2b) then rely on (A.1).

In the last part, we  establish lower volume bounds complementary to
(A.1), and  
show  that every \mz\ measure $\mu_t$ must assign sufficient mass to each 
spherical cap,
\begin{equation}\label{eq:2}
\mu_t(B_R) \geq C_3\frac{a}{\dimp}\,,\qquad \text{for } R\geq C_4 t^{-1}\,,
\end{equation}
with constants $C_3=C_3(d),\,C_4=C_4(d)$ made explicit in Theorem~\ref{main}. 
In particular, this recovers a portion of \cite[Theorem 5.5 (b)]{FilbirMhaskar2011}, but now with explicit constants for the sphere.

The bounds \eqref{eq:2} and (A.1) control the local mass distribution of \mz\ measures: 
(A.1) prevents concentration, and \eqref{eq:2} excludes large  holes
in the sampling manifold. Taken in combination, 
they imply that \mz\ measures quantitatively resemble the normalized surface 
measure at scale $t^{-1}$. However, it should be noted that the
constant $C_4$ grows rapidly with the dimension $d$, cf.~\eqref{eq:Vd}
in Theorem \ref{main}. While this may appear unsatisfactory at first
sight, it may equally well be regarded as a noteworthy difficulty and
as a new problem  that
emerges only by the explicit tracking of constants.

The analysis of \eqref{eq:2} relies on the deep bounds of \EMN\ \cite{EMN} for Jacobi polynomials. These estimates are essential for controlling the off-diagonal decay of the associated kernels with the required precision, which is yet another instance of kernel localization.

\subsubsection*{The paper is organized as follows.}
In Section \ref{sec:pre} we introduce the setting and recall the reproducing 
kernel for the polynomial space $\Pi_t$ in terms of Jacobi polynomials. 
Quantitative localization estimates for Jacobi polynomials are derived in 
Section \ref{sec:localization I}. In Section \ref{sec:consequences} we
derive 
the upper volume bound (A1) as a consequence of the upper \mz\ 
inequality and record some elementary implications of the lower \mz\ 
inequality. Lower bounds on the length of weighted \mz\ curves are established 
in Section \ref{sec:curves}, while $s$-dimensional sampling sets are treated in 
Section \ref{sec:Hausdorff}. Finally, Section \ref{sec:prep} provides the 
necessary off-diagonal kernel estimates, which are then used in 
Section \ref{sec:lower volume bounds} to derive the lower volume bound 
\eqref{eq:2}.

\section{Preliminaries}\label{sec:pre}
In this section we fix notation and 
summarize the basic facts about  polynomial subspaces  on the
sphere. 

\subsection{Notation and spherical geometry}
The standard inner product in $\mathbb{R}^{d+1}$ is denoted by $x\cdot y$. It induces the norm $\|x\|^2=x\cdot x$, so that the unit sphere is $\S^d=\{x\in\mathbb{R}^{d+1}: \|x\|=1\}$. The standard metric on the sphere measures distance between $x$ and $y$ in $\S^d$ by the angle $\phi\in[0,\pi]$ satisfying $\cos \phi=x\cdot y$. A closed ball on the sphere of radius $r\in[0,\pi]$ centered at $x$ is then the spherical cap
\begin{equation*}
B_r (x)= \{y\in \sd : x\cdot y \geq \cos r \}\,.
\end{equation*}
Thus $y\in B_r (x)$ if and only if the angle $\phi$ between $x$ and $y$ satisfies $0\leq \phi\leq r$. 

We endow $\S^d$ with the normalized surface measure, so that the volume of a subset $S\subseteq \S^d$ may be denoted by $|S|=\int_{S}1 dx$ with $|\S^d|=1$. The inner product between $f,g\in  L^2(\S^d)$ is written as $\langle f,g\rangle_{L^2(\S^d)}=\int_{\S^d} f(x)\overline{g(x)}dx$ and induces the norm $\|f\|_{L^2(\S^d)}$. 

All functions $f$ we consider are spherical polynomials, and we next summarize basic algebraic properties of the space of polynomials on the sphere of degree $t$. 

\subsection{Polynomials on the sphere and reproducing kernels}

The subspace $\pit \subseteq L^2(\S^d)$ consists of all algebraic polynomials in $d+1$ variables of degree at most $t\in\N$ restricted to the sphere. Its dimension is 
\begin{equation}\label{eq:dimp}
\dimp = \binom{d+t}{t} + \binom{d+t-1}{t-1} = \binom{d+t}{t} \,\frac{2t+d}{t+d}\,.
\end{equation}

Let $k^t:\S^d\times\S^d\rightarrow \mathbb{R}$ denote the reproducing kernel of $\Pi_t$. According to the convention $k^t_x(y) :=k^t(x,y)$, the kernel reproduces every $f\in\Pi_t$ by 
\begin{equation*}
f(x) = \langle f,k^t_x \rangle_{L^2(\S^d)}\,.
\end{equation*}

Since the space $\Pi_t$ is orthogonally invariant, so is its reproducing kernel $k^t$. It can be expressed by Jacobi polynomials $P^{(\alpha,\beta)}_t$ that are orthogonal 
on $[-1,1]$ with respect to the weight function $(1-u)^{\alpha}(1+u)^{\beta}$ and 
normalized by\footnote{We use the Gamma-function extension of the binomial coefficient and set $\binom{x}{y} = \frac{\Gamma(x+1)}{\Gamma(y+1)\Gamma(x-y+1)}$ 
for real \(x,y\) whenever the right-hand side is finite.} 
\begin{equation*}
P^{(\alpha,\beta)}_t(1) = \binom{t+\alpha}{t}\,.
\end{equation*}
Using the (re-)normalized Jacobi polynomials 
\begin{equation*}
\widetilde{P}^{(\alpha,\beta)}_t:=P^{(\alpha,\beta)}_t / P^{(\alpha,\beta)}_t(1)
\end{equation*} 
for the parameters $(\alpha,\beta)=(\frac{d}{2},\frac{d}{2}-1)$, the reproducing kernel has the clean representation 
 \begin{equation}\label{eq:def k in P}
k^t(x,y) =  \dimp \, \widetilde{P}_t^{(\frac{d}{2},\frac{d}{2}-1)}(x\cdot y)\,,\qquad x,y\in\S^d\,,
\end{equation}
see \cite[(4.31)]{Reimer}. The $L^2$-norm of the polynomial $k^t_x\in\Pi_t$ is 
\begin{equation}\label{eq:kt in L2}
\|k^t_x\|^2_{L^2(\S^d)}= \langle k^t_x, k^t_x \rangle _{L^2(\S^d)} = k^t(x,x)=\dimp \,,\qquad \forall\, x\in\S^d\,.  
\end{equation}
Instead of $k^t$, it is sometimes more convenient to directly use the normalized Jacobi polynomial 
\begin{equation*}
f^t(x,y):=\widetilde{P}_t^{(\frac{d}{2},\frac{d}{2}-1)}(x\cdot y) = \frac{k^t(x,y)}{\dimp}
\end{equation*}
that satisfies $f^t(x,x)=1$ and 
\begin{equation}\label{eq:norm of f}
\|f^t_x\|_{L^2(\S^d)}^2 = \frac{1}{\dimp}\,.
\end{equation}

Having established these algebraic features, we now shift towards analytic properties of Jacobi polynomials.

\section{Localization of the Jacobi polynomials}\label{sec:localization I}
In this section we quantify the localization properties of the Jacobi polynomials that generate the reproducing kernel $k^t$ of $\Pi_t$, cf.~\eqref{eq:def k in P}. These estimates capture how strongly the kernel concentrates near the diagonal and provide the key analytic input for the volume bounds established in the following sections. Although not strictly needed for our purposes, we present the arguments for the full range of parameters $\alpha,\beta$.

\begin{lemma} \label{l0}
If $\alpha\geq \beta> -1$ and $t\in\N$, then  we have
   \begin{equation}\label{eq:pPp}
   \Big|  \widetilde{P}^{(\alpha,\beta)}_t(\cos \phi) -1\Big|
      \leq  \frac{t\,(t+\alpha+\beta+1)}{2(\alpha+1)} \, \phi^2 \,.
   \end{equation}
 \end{lemma}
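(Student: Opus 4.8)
## Proof Proposal

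The plan is to exploit the fact that $\widetilde{P}^{(\alpha,\beta)}_t$ is a polynomial which equals $1$ at $u=1$, so that $\widetilde{P}^{(\alpha,\beta)}_t(u)-1$ vanishes at $u=1$ and the difference quotient is controlled by the derivative. Writing $u=\cos\phi$, one has $1-u = 1-\cos\phi \le \phi^2/2$, so it suffices to produce a bound of the form $|\widetilde{P}^{(\alpha,\beta)}_t(u)-1| \le K\,(1-u)$ for $u\in[-1,1]$ with $K = \frac{t(t+\alpha+\beta+1)}{\alpha+1}$; combining with $1-\cos\phi\le\phi^2/2$ then gives \eqref{eq:pPp}. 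So the real task is the one-variable polynomial inequality on $[-1,1]$.

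First I would recall the standard derivative formula for Jacobi polynomials, namely $\frac{d}{du}P^{(\alpha,\beta)}_t(u) = \frac{t+\alpha+\beta+1}{2}\,P^{(\alpha+1,\beta+1)}_{t-1}(u)$, together with the value $P^{(\alpha,\beta)}_t(1)=\binom{t+\alpha}{t}$. After dividing by $P^{(\alpha,\beta)}_t(1)$, this yields
\begin{equation*}
\frac{d}{du}\widetilde{P}^{(\alpha,\beta)}_t(u) = \frac{t+\alpha+\beta+1}{2}\,\frac{P^{(\alpha+1,\beta+1)}_{t-1}(1)}{P^{(\alpha,\beta)}_t(1)}\,\widetilde{P}^{(\alpha+1,\beta+1)}_{t-1}(u)\,.
\end{equation*}
The ratio of the two normalizing values simplifies: $\frac{P^{(\alpha+1,\beta+1)}_{t-1}(1)}{P^{(\alpha,\beta)}_t(1)} = \binom{t+\alpha}{t-1}\big/\binom{t+\alpha}{t} = \frac{t}{\alpha+1}$, so the derivative of $\widetilde{P}^{(\alpha,\beta)}_t$ equals $\frac{t(t+\alpha+\beta+1)}{2(\alpha+1)}\,\widetilde{P}^{(\alpha+1,\beta+1)}_{t-1}(u)$. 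The crucial point is now that, under the hypothesis $\alpha\ge\beta>-1$ (hence also $\alpha+1\ge\beta+1>0$), the renormalized Jacobi polynomial satisfies $|\widetilde{P}^{(\alpha+1,\beta+1)}_{t-1}(u)|\le 1$ for all $u\in[-1,1]$: its maximum modulus on $[-1,1]$ is attained at $u=1$, where it equals $1$. This monotonicity/extremum fact is classical for $\alpha\ge\beta$ (e.g. from the integral representation of Jacobi polynomials, or from $\big|P^{(\alpha,\beta)}_n(u)\big|\le P^{(\alpha,\beta)}_n(1)$ valid when $\alpha=\max(\alpha,\beta)\ge -1/2$; for $-1<\beta\le\alpha$ one can use the reflection identity together with the bound at the endpoint). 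I would cite Szeg\H{o}'s monograph for this.

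Granting $|\widetilde{P}^{(\alpha+1,\beta+1)}_{t-1}|\le 1$ on $[-1,1]$, the fundamental theorem of calculus gives, for $u\in[-1,1]$,
\begin{equation*}
\big|\widetilde{P}^{(\alpha,\beta)}_t(u)-1\big| = \Big|\int_u^1 \frac{d}{dv}\widetilde{P}^{(\alpha,\beta)}_t(v)\,dv\Big| \le \frac{t(t+\alpha+\beta+1)}{2(\alpha+1)}\,(1-u)\,.
\end{equation*}
Substituting $u=\cos\phi$ and using $1-\cos\phi\le\phi^2/2$ completes the proof. I expect the main obstacle to be pinning down the uniform bound $|\widetilde{P}^{(\alpha+1,\beta+1)}_{t-1}(u)|\le 1$ on $[-1,1]$ with a clean reference valid for the full parameter range $\alpha\ge\beta>-1$ rather than only $\alpha\ge\beta\ge-1/2$; everything else is bookkeeping with the Gamma-function normalization. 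An alternative that sidesteps the extremal inequality entirely would be to expand $\widetilde{P}^{(\alpha,\beta)}_t(\cos\phi)-1$ in a Fourier/Gegenbauer-type series and estimate term by term, but the derivative argument above is shorter and gives exactly the stated constant.
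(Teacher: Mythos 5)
Your proposal is correct and follows essentially the same route as the paper: both rest on the derivative identity $\frac{d}{du}P^{(\alpha,\beta)}_t=\frac{t+\alpha+\beta+1}{2}P^{(\alpha+1,\beta+1)}_{t-1}$ together with Szeg\H{o}'s Theorem 7.32.1 applied to the shifted parameters $(\alpha+1,\beta+1)$ (where $\alpha+1\ge\beta+1>0\ge-\tfrac12$, so your worry about the range $\alpha\ge\beta>-1$ is moot), and the same binomial ratio $t/(\alpha+1)$. The only difference is that you integrate in $u$ and use $1-\cos\phi\le\phi^2/2$ where the paper applies the mean value theorem in $\phi$ with $|\sin\theta|\le|\theta|$ --- your variant even gains a factor $2$ over the stated constant --- and you should add the one-line remark that $t=0$ is trivial since $\widetilde{P}^{(\alpha,\beta)}_0\equiv1$.
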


 \begin{proof}
 For $t=0$, the inequality is trivially satisfied due to $\widetilde{P}^{(\alpha,\beta)}_0\equiv 1$. We now suppose that $t\geq 1$ and define the function $p(\phi)=\widetilde{P}^{(\alpha,\beta)}_t(\cos \phi)$. The mean value theorem of differential calculus yields
 \begin{align*}
 |p(\phi)-p(0)| & \leq |\phi| \sup_{|\theta|\leq|\phi|}|p'(\theta)|\\
 & = |\phi| \sup_{|\theta|\leq|\phi|} \Big| \big(\widetilde{P}^{(\alpha,\beta)}_t\big)'(\cos\theta)\,\,\sin\theta\Big|\,.
 \end{align*}
 Since $p(0)=\widetilde{P}^{(\alpha,\beta)}_t(1)=1$ and $|\sin\theta|\leq |\theta|$, we have
 \begin{equation}\label{eq:midterms}
 \Big| \widetilde{P}^{(\alpha,\beta)}_t(\cos \phi)-1\Big| \leq |\phi|^2 \sup_{|\theta|\leq|\phi|} \Big| \big(\widetilde{P}^{(\alpha,\beta)}_t\big)'(\cos\theta)\Big|\,.
 \end{equation}
The derivative of $P^{(\alpha,\beta)}_t$ is the scaled Jacobi polynomial 
\begin{equation*}
\big(P^{(\alpha,\beta)}_t\big)'=\frac{t+\alpha+\beta+1}{2} P_{t-1}^{(\alpha+1,\beta+1)}\,,
\end{equation*}
see \cite[18.9.15]{NIST:DLMF}. If $\alpha+1\geq \beta+1> -1$ and
$\max(\alpha+1,\beta+1)\geq -\frac{1}{2}$, then, according to \cite[Theorem 7.32.1]{Szego1975}, the maximum of $\big|P_{t-1}^{(\alpha+1,\beta+1)} \big|$ on $[-1,1]$ is attained in $1$, i.e., 
\begin{equation*}
\max_{u\in[-1,1]}\big| P_{t-1}^{(\alpha+1,\beta+1)}(u)\big| = P_{t-1}^{(\alpha+1,\beta+1)}(1) = \binom{t+\alpha}{t-1}\,.
\end{equation*}
Therefore, the derivative of the normalized Jacobi polynomial satisfies
\begin{align*}
\Big|\big(\widetilde{P}^{(\alpha,\beta)}_t\big)' (\cos\theta) \Big| &=\Big|\frac{1}{\binom{t+\alpha}{t}}\,\frac{t+\alpha+\beta+1}{2} P_{t-1}^{(\alpha+1,\beta+1)}(\cos\theta)\Big|\\
&\leq  \Big|\frac{(t+\alpha+\beta+1)\binom{t+\alpha}{t-1}}{2\binom{t+\alpha}{t}}\Big|\\
& = 
\Big|\frac{t\,(t+\alpha+\beta+1)}{2(\alpha+1)}\Big|\,.
\end{align*}
We substitute this estimate  into \eqref{eq:midterms} and obtain the claim \eqref{eq:pPp}. 
\end{proof}

In the case $(\alpha,\beta)=(\frac{d}{2},\frac{d}{2}-1)$ 
the Jacobi polynomial induces the reproducing kernel
$k^t(x,y)=\dimp\,\widetilde{P}^{(\frac{d}{2},\frac{d}{2}-1)}_t(x\cdot
y)$, see \eqref{eq:def k in P}.  We now  introduce a 
radius $r$, so that the normalized Jacobi polynomial stays above a
target level $\nu$ for all angles $\phi\leq r$.
\begin{prop}\label{cor:conc}
Let $\nu \in (0,1)$. If the radius $r=r(d,t,\nu)>0$ is chosen by
\begin{equation}\label{eq:r}
r^2 =  (1-\nu)\frac{(d+2)}{t(t+d)}\,,
\end{equation}
then the normalized Jacobi polynomial obeys the  lower bound 
\begin{equation*}
 \widetilde{P}^{(\frac{d}{2},\frac{d}{2}-1)}_t(\cos \phi)  \geq  \nu\, \qquad \forall \, 0\leq \phi\leq r\,.
\end{equation*}
\end{prop}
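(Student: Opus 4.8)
The plan is to apply Lemma~\ref{l0} directly with the parameters $(\alpha,\beta)=(\tfrac{d}{2},\tfrac{d}{2}-1)$ and to convert the quadratic deviation estimate \eqref{eq:pPp} into the asserted lower bound. First I would check admissibility of these parameters in Lemma~\ref{l0}: since $d\geq 1$ we have $\alpha=\tfrac{d}{2}\geq \tfrac{d}{2}-1=\beta>-1$, so the hypothesis $\alpha\geq\beta>-1$ is satisfied.

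Next I would evaluate the constant on the right-hand side of \eqref{eq:pPp} for this choice. Here $\alpha+\beta+1=d$ and $\alpha+1=\tfrac{d+2}{2}$, so the identity $\frac{t(t+\alpha+\beta+1)}{2(\alpha+1)}=\frac{t(t+d)}{d+2}$ holds and Lemma~\ref{l0} gives
\[
\Big|\widetilde{P}^{(\frac{d}{2},\frac{d}{2}-1)}_t(\cos\phi)-1\Big|\leq \frac{t(t+d)}{d+2}\,\phi^2
\]
for every $\phi$.

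Finally I would plug in the prescribed radius: by the definition \eqref{eq:r} we have $r^2=(1-\nu)\frac{d+2}{t(t+d)}$, so for $0\leq\phi\leq r$ the right-hand side above is bounded by $\frac{t(t+d)}{d+2}\,r^2=1-\nu$. Consequently $\widetilde{P}^{(\frac{d}{2},\frac{d}{2}-1)}_t(\cos\phi)\geq 1-(1-\nu)=\nu$ throughout $0\leq\phi\leq r$, which is the claim.

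Since all the analytic substance is already contained in Lemma~\ref{l0}, there is no real obstacle here; the proposition is essentially a one-line substitution once the parameter arithmetic has been carried out. The only point worth a moment's attention is that we are using the two-sided estimate $\big|\widetilde{P}_t-1\big|\le\cdots$ but need only its lower half, so no monotonicity of $\phi\mapsto\widetilde{P}^{(\frac{d}{2},\frac{d}{2}-1)}_t(\cos\phi)$ is invoked.
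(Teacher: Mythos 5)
Your argument is correct and is essentially the paper's own proof: both apply Lemma~\ref{l0} with $(\alpha,\beta)=(\tfrac{d}{2},\tfrac{d}{2}-1)$, note that the constant becomes $\tfrac{t(t+d)}{d+2}$, and observe that the choice of $r$ in \eqref{eq:r} makes the quadratic error at most $1-\nu$, hence $\widetilde{P}^{(\frac{d}{2},\frac{d}{2}-1)}_t(\cos\phi)\geq\nu$ for $0\leq\phi\leq r$. Your explicit check of the admissibility condition $\alpha\geq\beta>-1$ and the remark that no monotonicity is needed are fine but add nothing beyond the paper's substitution.
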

This proposition provides a uniform lower bound on the 
reproducing kernel $k^t$ near the diagonal and shows that  the natural
scale is of the order $t^{-1}$.

\begin{proof}
According to Lemma \ref{l0}, we obtain $\widetilde{P}^{(\alpha,\beta)}_t(\cos \phi)\geq \nu$ for $0\leq\phi\leq r$ by choosing $r>0$ as the solution of the equation 
\begin{equation*}
1 - \frac{t\,(t+\alpha+\beta+1)}{2(\alpha+1)}\,r^2 = \nu\,.
\end{equation*}
This leads to 
\begin{equation*}
r^2 =  (1-\nu)\frac{2(\alpha+1)}{t(t+\alpha+\beta+1)}\,,
\end{equation*}
and $(\alpha,\beta)=(\frac{d}{2},\frac{d}{2}-1)$ yields $r$ as in \eqref{eq:r}.
\end{proof}

\section{Consequences of \mz\ inequalities}\label{sec:consequences}
In this section we extract quantitative consequences of the \mz\ inequalities. The resulting mass and support constraints form the basis for the length and volume estimates developed in the subsequent sections.

Recall that a Borel measure $\mu_t$ on the sphere $\S^d$ is called a \mz\ measure if there exist constants $a,A>0$ such that
\begin{equation*}
  a \|f\|_{L^2(\sd)}^2 \stackrel{\text{lower}}{\le} \int_{\sd} |f(x)|^2\, d\mu_t(x) \stackrel{\text{upper}}{\le} A \|f\|_{L^2(\sd)}^2
  \qquad \text{for all } f\in\Pi_t\,.
\end{equation*}
We refer to these estimates as the lower and upper \mz\ inequalities. Measures satisfying the upper inequality are also called Carleson measures in complex analysis.

  \subsection{Consequences of the upper \mz\ inequality}
For a measure $\mu_t$ satisfying the upper \mz\ inequality, we will derive an upper bound on $\mu_t\big(B_{r}(x)\big)$. 

Such an upper bound can be derived by a simple idea: for
every closed, nonempty subset $S\subseteq\S^d$ and every 
$f\in\Pi_t$, the following chain of inequalities holds,
\begin{align*}
\min_{y\in S}\big(|f(y)|^2\big) \,\mu_t(S) & \leq \int_{S} |f(y)|^2\, d\mu_t(y) \\
&\leq \int_{\S^d} |f(y)|^2\, d\mu_t(y)\\
&\leq A \|f\|_{L^2(\S^d)}^2 \, ,
\end{align*}
 where the last estimate is due to the upper \mz\ inequality. Therefore, the 
volume $\mu_t(S) $ must obey
\begin{equation}\label{eq:general upper bound for f}
\mu_t(S)\leq \frac{A\|f\|_{L^2(\S^d)}^2}{\min_{y\in S}|f(y)|^2} \qquad \forall f\in\Pi_t\,.
\end{equation}
The right-hand side does not depend on $\mu_t$ and may constitute a
reasonable bound if we feed  in a polynomial  $f \in \Pi_t$ that
is well-localized in $S$. The natural choice of $S$ is a spherical cap $B_r(x)$. 
\begin{tm}[upper volume bounds] \label{mutmeas}
Let $\nu\in(0,1)$ and assume that $\mu_t$ satisfies the upper \mz\ inequality of order $t$. If the radius $r>0$ is chosen by
 \begin{equation}\label{eq:r2}
r^2 =  (1-\nu)\frac{(d+2)}{t(t+d)}\,,
\end{equation}
then the upper volume bound
  \begin{equation}
    \label{eq:7}
    \mut \big(B_{r}(x)\big) \leq  \frac{A}{\nu ^2\,
      \dimp } 
  \end{equation}
  holds for all $x\in\S^d$. 
\end{tm}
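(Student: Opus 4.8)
The plan is to apply the general bound \eqref{eq:general upper bound for f} with $S = B_r(x)$ and with the test function $f = f^t_x$, the normalized Jacobi polynomial (equivalently, the normalized reproducing kernel $k^t_x/\dimp$). This is the natural ``well-localized'' polynomial in $\Pi_t$ suggested by the discussion preceding the theorem: it peaks at $y=x$ with $f^t_x(x)=1$ and has a small $L^2$-norm, namely $\norm{f^t_x}_{L^2(\S^d)}^2 = 1/\dimp$ by \eqref{eq:norm of f}. First I would note that $f^t_x \in \Pi_t$, so \eqref{eq:general upper bound for f} is applicable and gives
\[
\mu_t(B_r(x)) \;\leq\; \frac{A\,\norm{f^t_x}_{L^2(\S^d)}^2}{\min_{y\in B_r(x)} |f^t_x(y)|^2} \;=\; \frac{A}{\dimp \,\min_{y\in B_r(x)} |f^t_x(y)|^2}\,.
\]

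Next I would lower bound the denominator. By definition $f^t_x(y) = \widetilde{P}^{(d/2,\,d/2-1)}_t(x\cdot y)$, and for $y\in B_r(x)$ the angle $\phi$ between $x$ and $y$ satisfies $0\le\phi\le r$, so $x\cdot y = \cos\phi$ with $\phi \in [0,r]$. Since the radius $r$ in \eqref{eq:r2} is exactly the one from \eqref{eq:r}, Proposition \ref{cor:conc} applies verbatim and yields $\widetilde{P}^{(d/2,\,d/2-1)}_t(\cos\phi) \geq \nu$ for all $\phi \in [0,r]$. Hence $f^t_x(y) \geq \nu > 0$ for every $y \in B_r(x)$, and in particular $\min_{y\in B_r(x)} |f^t_x(y)|^2 \geq \nu^2$. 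Substituting this into the displayed inequality above gives exactly \eqref{eq:7}, uniformly in $x\in\S^d$ by orthogonal invariance of the kernel.

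There is no serious obstacle here: the theorem is essentially an assembly of two ingredients that are already in hand, namely the elementary estimate \eqref{eq:general upper bound for f} extracted from the upper \mz\ inequality, and the near-diagonal lower bound for the normalized Jacobi polynomial from Proposition \ref{cor:conc}. The only points requiring a word of care are (i) checking that $B_r(x)$ is closed and nonempty, which it is since $r>0$, so that the $\min$ in \eqref{eq:general upper bound for f} is attained and the chain of inequalities preceding it is valid, and (ii) keeping track of the fact that the choice of $r$ in \eqref{eq:r2} matches \eqref{eq:r} so that Proposition \ref{cor:conc} can be quoted with the same $\nu$. The real content — the quantitative localization — has already been done in Section \ref{sec:localization I}; this theorem simply converts it into a mass bound. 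If one wished, one could optimize over $\nu \in (0,1)$ afterward, trading the factor $\nu^{-2}$ against the shrinking of $r$, but the statement as given just records the family of bounds parametrized by $\nu$.
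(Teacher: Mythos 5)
Your proposal is correct and follows exactly the paper's own argument: substitute the normalized Jacobi polynomial $f^t_x$ into \eqref{eq:general upper bound for f} with $S=B_r(x)$, use $\|f^t_x\|_{L^2(\S^d)}^2=1/\dimp$ from \eqref{eq:norm of f}, and invoke Proposition \ref{cor:conc} to bound the denominator below by $\nu^2$. The additional remarks on closedness of $B_r(x)$ and the matching of \eqref{eq:r2} with \eqref{eq:r} are fine but not needed beyond what the paper already records.
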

Theorem \eqref{mutmeas} can be read on different levels.  Extracting
$\nu $ from \eqref{eq:r2} yields
$$
\nu = 1- r^2 \frac{t(t+d)}{d+2} \, ,
$$
so that the upper bound is just
$$
\mu _t \big(B_{r}(x)\big) \leq  \frac{A}{
  \dimp } \, \frac{1}{(1- r^2 \frac{t(t+d)}{d+2})^2}\, .
$$
This inequality holds for all $r$ for which 
$\nu \in (0,1)$, but it is meaningful only for small $r$ or $\nu $
close to $1$. 
By choosing $r= \rho \sqrt{d} / (2t)$, the right-hand side is bounded
by $\frac{A}{  \dimp } (1-2\rho ^2)^{-2}$. Thus, 
at the scale $r\asymp t^{-1}$, the resulting upper volume bound is of
order $A/t^d$. This substantially sharpens the necessary quadrature
bounds in \cite[Section 6.5]{Reimer}, which treat only the special
case $a=A=1$ for finitely supported measures and without explicit
constants. Our result applies to a far more general measure-theoretic
setting, only the one-sided upper \mz\ inequality is needed, and it
provides fully quantitative bounds. 

The proof is based on choosing the kernel $k^t_x=\dimp
\widetilde{P}^{(\frac{d}{2},\frac{d}{2}-1)}_t(x\cdot y)$ or
equivalently the Jacobi polynomial
$f^t_x(y)=\widetilde{P}^{(\frac{d}{2},\frac{d}{2}-1)}_t(x\cdot y)$ for
the right-hand side of  \eqref{eq:general upper bound for f}. 
\begin{proof}
Let $f^t(x,y)=\widetilde{P}^{(\frac{d}{2},\frac{d}{2}-1)}_t(x\cdot y)$. By \eqref{eq:norm of f}, the $L^2$-norm of $f^t_x$ is 
\begin{equation*}
\|f^t_x\|_{L^2(\S^d)}^2 = \frac{1}{\dimp}\,.
\end{equation*}
For $\cos\phi = x\cdot y$, the condition $y\in B_r(x)$ translates to $0\leq\phi\leq r$. 
By Proposition \ref{cor:conc} we have the lower bound 
\begin{equation}\label{eq:k^ unten}
| f^t_x(y)| = | \widetilde{P}^{(\frac{d}{2},\frac{d}{2}-1)}_t(\cos \phi)| \geq \nu \,,\qquad \forall \,y\in B_r(x)\,.
\end{equation}
Thus, \eqref{eq:general upper bound for f} leads to $\mut \big(B_{r} (x)\big) \leq \frac{A}{\nu ^2 \dimp }$.
\end{proof}

\subsection*{Near-optimal choice}
The reproducing kernel is a nearly optimal choice in \eqref{eq:general upper bound for f} when compared to the best value
\begin{equation*}
\min_{f\in\Pi_t}\,\frac{A\|f\|_{L^2(\S^d)}^2}{\min_{y\in S}|f(y)|^2} \geq \frac{A}{\dimp}\,.
\end{equation*}
To see this, fix $S\subseteq\S^d$ and let $f\in\Pi_t$ be arbitrary. Since $\min_{y\in S}|f(y)|\le |f(x)|$ for all $x\in S$ and 
$|f(x)|^2=|\langle f,k_x^t\rangle|^2\leq \|f\|_{L^2(\S^d)}^2 \dimp$, we always have
\begin{align*}
\frac{A\|f\|_{L^2(\S^d)}^2}{\min_{y\in S}|f(y)|^2}
&\geq 
\frac{A\|f\|_{L^2(\S^d)}^2}{|f(x)|^2}\ge
\frac{A}{\dimp}\,.
\end{align*}
By comparison, for small spherical caps $B_r(x)$ such that $r$ is correlated to $\nu\in(0,1)$, Theorem \ref{mutmeas} 
yields the bound $\frac{A}{\nu^2\,\dimp}$. In the context of the
estimate \eqref{eq:general upper bound for f}, the kernel $k^t_x$ (or
equivalently $f^t_x$) is therefore a nearly optimal choice for achieving the upper bound with respect to the spherical cap $B_r(x)$.

\subsection*{Sub-optimal choice}
Another natural choice for $f$ in \eqref{eq:general upper bound for f} is  
\begin{equation}\label{eq:Kt}
K^t(x,y)=K^t_x(y)=(x\cdot y)^t\,.
\end{equation}
It peaks at $y=x$ and is orthogonally invariant, just as $k^t$ is. Since the surface measure on $\S^d$ is rotation-invariant, the $L^2$-norm of $K^t_x$ does not depend on the specific choice of $x\in\S^d$ and we may define
\begin{equation}\label{eq:def Lambda}
\Lambda_{d,t}:=\|K^t_x\|_{L^2(\S^d)}^2 = \int_{\S^d} (x\cdot y)^{2t}\,dy\,.
\end{equation}
For all $y\in B_r(x)$, we have $x\cdot y\geq \cos r$ and therefore
$|K^t_x(y)|^2\geq (\cos r)^{2t}$. Hence, substituting  
$f= K^t_x$ in \eqref{eq:general upper bound for f}, leads to 
\begin{equation}\label{eq:up23}
\mu_t\big(B_r(x)\big) \leq \frac{A\, \Lambda_{d,t}}{(\cos r)^{2t}}\,.
\end{equation}

We still need to compare $\Lambda_{d,t}$ with the factor $1/\dimp$ in Theorem \ref{mutmeas}. Repeated application of the Laplace operator in spherical coordinates, as executed in \cite{BachocEhler} for instance, leads to
\begin{equation*}
\Lambda_{d,t} = \frac{1\cdot 3  \cdots
  (2t-1)}{(d+1)(d+3)\cdots(d+2t-1)}
=\frac{(\tfrac{1}{2})_t}{(\frac{d+1}{2})_t} \,,
\end{equation*}
where $(u)_t$ is the Pochhammer symbol $(u)_t=\frac{\Gamma(t+u)}{\Gamma(u)}=u(u+1)\cdots (u+t-1)$. Let us fix the dimension $d$, consider large $t$, and choose $r\asymp t^{-1}$, which matches the choice in \eqref{eq:r2}.  Since     
      \begin{equation*}
      \Lambda_{d,t}=\frac{\Gamma(t+\frac{1}{2})\Gamma(\frac{d+1}{2})}{\Gamma(\frac{1}{2})\Gamma(t+\frac{d+1}{2})} \sim \frac{\Gamma(\frac{d+1}{2})}{\Gamma(\frac{1}{2})}t^{-\frac{d}{2}}\,,
      \end{equation*}
whereas $ 1/\dimp\sim \frac{2}{d!}t^{-d}$, \eqref{eq:7} in Theorem \ref{mutmeas} is orders of magnitude sharper than \eqref{eq:up23}. This shows that the kernel $K^t$ is not sufficiently localized for the approach \eqref{eq:general upper bound for f} at scale $t^{-1}$. Nonetheless, its simplicity also has benefits that we will use next.

\subsection{Elementary consequences of the lower \mz\ inequality}
We now verify necessary support conditions for the lower \mz\ inequality to hold. Here, the simplicity of $K^{t}$ in \eqref{eq:Kt} is advantageous and it is therefore used to generate test functions in $\Pi_t$. 

Recall the moments $\Lambda_{d,t}= \int_{\S^d} (x\cdot y)^{2t}\,dy$ of the Lebesgue measure on $\S^d$ from \eqref{eq:def Lambda}. For $t=1$, we obtain $\Lambda_{d,1}=\frac{1}{d+1}$, and  $g(y)=(x\cdot y)^2$ leads to $\Lambda_{d,t}^{1/t}=\|g\|_{L^t(\S^d)}$. To estimate $\Lambda_{d,t}^{1/t}$, for $1\leq t \leq \infty$, the H\"older inequality yields
\begin{equation*}
\frac{1}{d+1}=\Lambda_{d,1}=\| g\|_{L^1(\S^d)} \leq \|g\|_{L^{t}(\S^d)}\leq \|g\|_{L^{\infty}(\S^d)} = 1\,.
\end{equation*}
Consequently, $\Lambda_{d,t}^{1/t}$ is sandwiched between 
\begin{equation}\label{eq:ineq for Lambda}
\frac{1}{d+1} \leq \Lambda_{d,t}^{1/t} \leq 1\,.
\end{equation}
and satisfies $\lim_{t\rightarrow\infty}\Lambda_{d,t}^{1/t} = 1$.

We derive the following elementary covering property of \mz\ measures. It implies that the support of \mz\ measures cannot be contained in a spherical cap that is too small.
\begin{lemma}\label{lemma:ally2}
Assume that $\mu_t$ satisfies the lower \mz\ inequality of order $t$ and let $r\in(0,\frac{\pi}{2}]$. If $\supp \mu_t \subseteq B_r(x)$, then 
\begin{equation}\label{eq:sin r}
(\sin r)^{2t} \geq  \frac{a}{\mu_t(\S^d)} \,\Lambda_{d,t}\,.
\end{equation}
\end{lemma}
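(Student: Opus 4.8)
The idea is to exploit the lower \mz\ inequality with a cleverly rotated copy of the simple kernel $K^t$ from \eqref{eq:Kt}. Suppose $\supp \mu_t \subseteq B_r(x)$. Pick the antipodal point $-x$ and consider the test function $f(y) = K^t_{-x}(y) = (-x\cdot y)^t = (-1)^t (x\cdot y)^t$, so that $|f(y)|^2 = (x\cdot y)^{2t}$. By rotation invariance of the surface measure, $\|f\|_{L^2(\S^d)}^2 = \Lambda_{d,t}$, exactly as in \eqref{eq:def Lambda}. Applying the lower \mz\ inequality to this $f\in\Pi_t$ gives
\[
a\,\Lambda_{d,t} = a\|f\|_{L^2(\S^d)}^2 \leq \int_{\S^d} |f(y)|^2\,d\mu_t(y) = \int_{B_r(x)} (x\cdot y)^{2t}\,d\mu_t(y)\,,
\]
where the last equality uses $\supp\mu_t \subseteq B_r(x)$.

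\textbf{Bounding the integrand.} The key point is that on the cap $B_r(x)$ the quantity $x\cdot y$ is \emph{not} close to $1$ but bounded away from it: for $y\in B_r(x)$ we have $\cos r \leq x\cdot y \leq 1$, which is the wrong direction. The trick is that I want an \emph{upper} bound on $(x\cdot y)^{2t}$ over $B_r(x)$, and the naive bound $(x\cdot y)^{2t}\leq 1$ is too weak — it would only give $a\Lambda_{d,t} \leq \mu_t(\S^d)$, with no $r$-dependence. Instead I should use the antipodal kernel against the \emph{complementary} region, or equivalently note that the relevant estimate must come from comparing the cap $B_r(x)$ with where $K^t_{-x}$ is small. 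Let me reconsider: the correct test function is $f(y) = K^t_z(y)$ for a point $z$ chosen so that $K^t_z$ is small throughout $B_r(x)$, yet has a fixed $L^2$-norm $\Lambda_{d,t}$. Taking $z = -x$, for $y\in B_r(x)$ the angle between $y$ and $-x$ lies in $[\pi-r,\pi]$, so $|z\cdot y| = |{-x\cdot y}| = x\cdot y \leq 1$ — still no help directly. The resolution is the substitution used implicitly in the statement: one applies the inequality with $f(y) = (x\cdot y)^t$ reinterpreted via the complement. Concretely, since $\supp\mu_t\subseteq B_r(x)$, for $y\in\supp\mu_t$ the \emph{orthogonal} component satisfies $\|y - (x\cdot y)x\|^2 = 1 - (x\cdot y)^2 = \sin^2\phi \leq \sin^2 r$ where $\phi$ is the angle; thus $(x\cdot y)^{2t}$ need not be small, but $(1-(x\cdot y)^2)^t = (\sin^2\phi)^t \leq (\sin r)^{2t}$ is. So I choose $f\in\Pi_t$ to be an orthogonally invariant polynomial whose square is comparable to $(1-(x\cdot y)^2)^t$ on the cap.

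\textbf{The right test function.} Take $w$ any unit vector orthogonal to $x$ and set $f(y) = (w\cdot y)^t \in \Pi_t$. For $y\in B_r(x)$, decompose $y = (x\cdot y)x + y'$ with $\|y'\| = \sin\phi \leq \sin r$, whence $|w\cdot y| = |w\cdot y'| \leq \|y'\| \leq \sin r$, so $|f(y)|^2 \leq (\sin r)^{2t}$ pointwise on $\supp\mu_t$. On the other hand, by rotation invariance $\|f\|_{L^2(\S^d)}^2 = \int_{\S^d}(w\cdot y)^{2t}\,dy = \Lambda_{d,t}$. Feeding this into the lower \mz\ inequality,
\[
a\,\Lambda_{d,t} = a\|f\|_{L^2(\S^d)}^2 \leq \int_{\S^d}|f(y)|^2\,d\mu_t(y) = \int_{B_r(x)}|f(y)|^2\,d\mu_t(y) \leq (\sin r)^{2t}\,\mu_t(\S^d)\,,
\]
which rearranges to \eqref{eq:sin r}. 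The restriction $r\leq\pi/2$ is only needed so that $\sin r$ is monotone and the bound is non-trivial (for $r$ near $\pi/2$ the cap is a hemisphere and $\sin r = 1$, consistent with $a\Lambda_{d,t}\leq\mu_t(\S^d)$).

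\textbf{Main obstacle.} The only real subtlety is identifying the correct test polynomial — the reflex is to use $K^t_x$ peaked \emph{at} $x$, but that gives a vacuous estimate; the effective choice peaks on the equator orthogonal to $x$, so that it is uniformly small on the cap $B_r(x)$ while retaining the explicit $L^2$-mass $\Lambda_{d,t}$. Once that is in place, the estimate $|w\cdot y|\leq\sin r$ on $B_r(x)$ is elementary from the orthogonal decomposition, and rotation invariance of the surface measure handles the norm computation, so no genuinely hard analysis is required.
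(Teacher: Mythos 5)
Your final argument is correct and is essentially the paper's proof: the paper rotates $x$ to the north pole $e_{d+1}$ and uses the test polynomial $f(y)=y_1^t$ (a coordinate orthogonal to the cap's center), bounding $|y_1|^2\le 1-y_{d+1}^2\le \sin^2 r$ on the cap, which is exactly your choice $f(y)=(w\cdot y)^t$ with $w\perp x$. The exploratory false starts in your middle paragraph are harmless; the decisive step --- testing against a polynomial peaked on the equator orthogonal to $x$ rather than at $x$ --- is precisely what the paper does.
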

In other words, if \eqref{eq:sin r} is not satisfied, then the support of $\mu_t$ cannot be contained in any ball of radius $r$. Also 
notice the requirement $0<r\leq \frac{\pi}{2}$, so that Lemma \ref{lemma:ally2} does not apply to the complementary cap  $\overline{\S^d\setminus B_r(x)} = B_{\pi-r}(-x)$. 
\begin{proof}
We first assume that $z=e_{d+1}$ is the north pole and $\supp \mu_t\subseteq B_{r}(e_{d+1})$. 
Consider $f(y)=y_1^t$, so that the lower \mz\ bound yields
\begin{equation}\label{eq:x12b}
a\,
\Lambda_{d,t} \leq  \int_{\S^d} |y_1|^{2t}\,d\mu_t(y)\,,\qquad \forall\, t\in\N\,.
\end{equation}

Since $y\in B_{r}(e_{d+1})$ implies $y_{d+1}\geq \cos r$ and  $\cos
r\geq 0$  for $0<r\leq \pi/2$, we  deduce $y_{d+1}^2\geq (\cos r)^2$ and, hence, 
\begin{equation*}
|y_1|^2\leq 1-|y_{d+1}|^2\leq 1-(\cos r)^2=(\sin r)^2\,.
\end{equation*}
Therefore, on $\supp\mu_t$ the function $|y_1|^{2t}$ is bounded by $(\sin r)^{2t}$, and we obtain
\begin{align*}
a \Lambda_{d,t}  \leq \int_{\S^d} |y_1|^{2t}\,d\mu_t(y) \leq (\sin r)^{2t} \mu_t(\S^d)\,.
\end{align*}
This directly translates into $(\sin r)^{2t} \geq \frac{a}{\mu_t(\S^d)} \Lambda_{d,t}$. 

For the general case $x\in\S^d$, we apply a rotation $O$ such that $Oe_{d+1} = x$. The push-forward measure $\tilde{\mu}_t:=O_*\mu_t$ is defined by $\tilde{\mu}_t(S)=\mu_t(O^*S)$ for every measurable subset $S\subseteq \S^d$. The condition $\supp \mu_t\subseteq B_r(x)$ directly implies $\supp \tilde{\mu}_t\subseteq B_r(e_{d+1})$. Since the space $\Pi_t$ is orthogonally invariant, $\tilde{\mu}_t$ also satisfies the lower \mz\ inequality with the same constant $a$. Our above analysis now implies the condition on $r$.
\end{proof}

\section{Bounds on the length of \mz\ curves}\label{sec:curves}
We specialize the upper volume bounds in Theorem \ref{mutmeas} to measures induced by curves and derive quantitative lower bounds on the length of \mz\ curves. 

A curve is a continuous, piecewise smooth function $\gamma:[0,L]\rightarrow\S^d$ that we may consider as a space curve in $\mathbb{R}^{d+1}$. We always assume that $\gamma$ is a closed curve, i.e., $\gamma(0)=\gamma(L)$, and parametrized by arc-length, so that the speed $\|\dot{\gamma}\|$ equals $1$ almost everywhere, the length satisfies $\ell(\gamma)=L$, and the line integral is 
\begin{equation*}
\int_{\gamma} f :=\int_0^L f(\gamma(u))\,du\,.
\end{equation*}

In the following we also consider a 
weighted version. For a positive integer $t$, let $w_t$ be a nonnegative weight function that is defined and integrable on the trajectory of the curve $\gamma_t$. Every weighted curve $(\gamma_t,w_t)$ induces a measure $\mu_t$ satisfying $\mu_t(B_r(x))=\frac{1}{\ell(\gamma_t)}\int_{\gamma_t} 1_{B_r(x)} w_t$ and 
\begin{equation}\label{eq:measure from curve}
\int_{\S^d}|f(x)|^2\,d\mu_t=\frac{1}{\ell(\gamma_t)} \int_{\gamma_t} |f|^2 w_t\,.
\end{equation}
This enables us to define \mz\ curves. 
\begin{df}
We say that $(\gamma_t,w_t)$ is a (weighted) \mz\ curve (of order $t$ or for $\Pi_t$) if 
\begin{equation*}
a\|f\|^2_{L^2(\S^d)} \stackrel{\text{lower}}{\le} \frac{1}{\ell(\gamma_t)} \int_{\gamma_t} |f|^2 w_t \stackrel{\text{upper}}{\le} A\|f\|^2_{L^2(\S^d)}
\end{equation*}
holds for all $f\in\Pi_t$.
\end{df}
For related  concepts, see \cite{EG:2023, EGK:24,Lindblad1}.  
The upper volume bound in Theorem \ref{mutmeas} and the support
estimates in Lemma \ref{lemma:ally2} lead to the following lower bound
on the length of a \mz\ curve.
\begin{tm}[weighted curves]\label{tm:weighted curves}
Assume $t\geq d\geq 2$. If $(\gamma_t,w_t)$ is a weighted \mz\ curve of order $t$ on $\S^d$, then its length must satisfy
\begin{equation}\label{eq:in weight curve}
\ell(\gamma_t)\geq  \frac{\dimp}{t} \,\frac{\sqrt{d+2}}{4}\,\Big(\frac{a}{A}\Big)^{1+\frac{1}{2t}}\,.
\end{equation}
\end{tm}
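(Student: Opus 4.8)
The plan is to convert the upper volume bound of Theorem~\ref{mutmeas} into a packing estimate for the trajectory of $\gamma_t$, using the localized Jacobi kernels of Proposition~\ref{cor:conc} as test functions, and then to dispose of the degenerate short-curve regime by means of Lemma~\ref{lemma:ally2}. First fix $\nu\in(0,1)$ and let $\rho=r(d,t,\nu)$ be the radius of \eqref{eq:r2}, so that $\rho\asymp t^{-1}$. Partition $[0,\ell(\gamma_t)]$ into $N:=\lceil\ell(\gamma_t)/(2\rho)\rceil$ subintervals of equal length $\ell(\gamma_t)/N\le 2\rho$. Since the geodesic distance of two points on $\S^d$ is at most the arc-length of any path joining them, the $\gamma_t$-image of each subinterval lies in the cap of radius $\rho$ centred at the image of its midpoint; hence $\supp\mu_t$, which is carried by the trajectory, is covered by $N$ caps $B_\rho(x_1),\dots,B_\rho(x_N)$.

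Inserting $f\equiv 1\in\Pi_t$ into \eqref{eq:measure from curve} (with $\|1\|_{L^2(\S^d)}^2=1$) gives $a\le\mu_t(\S^d)\le A$. Summing the cap estimate \eqref{eq:7} over this cover,
\[
a\le\mu_t(\S^d)\le\sum_{j=1}^{N}\mu_t\bigl(B_\rho(x_j)\bigr)\le N\,\frac{A}{\nu^{2}\dimp}\,,
\]
so $N\ge a\nu^{2}\dimp/A$; and since $N-1<\ell(\gamma_t)/(2\rho)$, this yields $\ell(\gamma_t)\ge 2\rho\bigl(a\nu^{2}\dimp/A-1\bigr)$. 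Writing $\rho=\sqrt{1-\nu}\,\sqrt{d+2}/\sqrt{t(t+d)}$ from \eqref{eq:r2} and using $\sqrt{t(t+d)}\le\sqrt 2\,t$ (valid for $d\le t$), one maximizes the elementary factor $\nu^{2}\sqrt{1-\nu}$, whose maximum is attained at $\nu=4/5$. Provided $N$ is bounded away from $1$ — say $N\ge 3$, so that the rounding loss satisfies $N-1\ge\tfrac23 N$ — this gives $\ell(\gamma_t)\ge \tfrac{\dimp}{t}\,\tfrac{\sqrt{d+2}}{4}\,\tfrac aA$, which is stronger than \eqref{eq:in weight curve} because $a/A\le 1$.

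It remains to treat the case $N\le 2$, in which $a\nu^{2}\dimp/A\le 2$ forces $\dimp$ to be small relative to $A/a$; for $\nu=4/5$ this bounds the right-hand side of \eqref{eq:in weight curve} by $\tfrac{25}{32}\,\sqrt{d+2}\,(a/A)^{1/(2t)}/t$. On the other hand, a closed curve of length $\ell(\gamma_t)$ has its trajectory contained in the cap $B_{\ell(\gamma_t)/2}(\gamma_t(0))$; if $\ell(\gamma_t)>\pi$ there is nothing left to prove, and otherwise Lemma~\ref{lemma:ally2} with $r=\ell(\gamma_t)/2$, together with $\mu_t(\S^d)\le A$ and the bound $\Lambda_{d,t}^{1/(2t)}\ge(d+1)^{-1/2}$ from \eqref{eq:ineq for Lambda}, gives $\ell(\gamma_t)\ge 2\,(d+1)^{-1/2}(a/A)^{1/(2t)}$. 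The elementary inequality $64t\ge 64d\ge 25\sqrt{(d+1)(d+2)}$, valid for $d\ge 2$, shows that this dominates the previous bound, which completes the proof. The factor $(a/A)^{1/(2t)}$ in \eqref{eq:in weight curve} is exactly what extracting a $2t$-th root in Lemma~\ref{lemma:ally2} produces; it is present only to absorb the rounding loss of the packing argument.

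The conceptual skeleton — cover the curve by localized kernels, then count $\mu_t$-mass — is short, so the whole difficulty is quantitative. One must choose $\nu$ so that $\rho\,\nu^{2}$ is as large as possible while the per-cap loss $\nu^{-2}$ stays controlled, account honestly for the ceiling in $N=\lceil\ell(\gamma_t)/(2\rho)\rceil$ (which is precisely why Lemma~\ref{lemma:ally2} enters at all), and then verify the surviving dimension-dependent inequalities in the worst case $t=d$, which is exactly where the clean constant $\sqrt{d+2}/4$ comes from and where, by the examples of \cite{EGK:24,Lindblad2}, it is essentially best possible.
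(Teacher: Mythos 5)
Your proof is correct, but it follows a genuinely different route from the paper. The paper's argument is a \emph{single-cap} argument: it first uses Lemma~\ref{lemma:ally2} to choose $\nu$ (namely $1-\nu=\tfrac12(a/A)^{1/t}$, which is where the exponent $1+\tfrac1{2t}$ originates) so that the curve cannot be contained in any cap of radius $r$, and then runs an averaging argument --- a cyclic convolution of $w_t\circ\gamma_t$ with $1_{[-r,r]}$ --- to produce one point $x_t$ on the trajectory with $\int_{\gamma_t}1_{B_r(x_t)}w_t\ge 2ra$, to which the upper volume bound \eqref{eq:7} is applied once. You instead run a \emph{covering/packing} argument: cover the trajectory by $N=\lceil\ell(\gamma_t)/(2\rho)\rceil$ caps of radius $\rho$, play the total mass $\mu_t(\S^d)\ge a$ (from $f\equiv1$) against $N$ copies of \eqref{eq:7}, and optimize $\nu$ freely ($\nu=4/5$, giving the margin $64/(75\sqrt{10})>1/4$); Lemma~\ref{lemma:ally2} enters only to dispose of the rounding-degenerate regime $N\le2$, and I have checked that your two closing inequalities ($\tfrac{a\nu^2\dimp}{A}\le 2$ forcing the right-hand side of \eqref{eq:in weight curve} below $\tfrac{25\sqrt{d+2}}{32t}(a/A)^{1/(2t)}$, and $64d\ge 25\sqrt{(d+1)(d+2)}$ for $d\ge2$) do hold. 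Each approach buys something: yours is cleaner with respect to the weight (no convolution trick is needed, since only measure-theoretic subadditivity is used) and in the main case actually yields the stronger exponent $(a/A)^{1}$, with $(a/A)^{1+\frac1{2t}}$ surviving only through the degenerate case; the paper's version, on the other hand, produces the pointwise lower Ahlfors bound \eqref{eq:Ahlfors bound for curves} as a byproduct, which is reused in Remark~\ref{rem:curve and Alhfors} and Theorem~\ref{tm:s-dim}, and the single-cap mechanism is what generalizes to $s$-dimensional sets, where a partition into pieces of controlled diameter is not available.
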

Since the dimension $\dimp=\binom{d+t}{t} + \binom{d+t-1}{t-1}$ is  bounded by 
\begin{equation}\label{eq:estimate dimp}
\dimp  = \frac{(t+1)\cdots(t+d)}{d!}+\frac{ t \cdots(t+d-1)}{d!}\geq
\frac{2}{d!}t^d\, ,
\end{equation}
the main estimate  \eqref{eq:in weight curve} leads to $ \ell(\gamma_t) \gtrsim t^{d-1}$ with the quantitative version
\begin{equation}\label{eq:dim to td}
\ell(\gamma_t)\geq  \Big(\frac{a}{A}\Big)^{1+\frac{1}{2t}}\,\frac{\sqrt{d+2}}{2 \,d!}  \,t^{d-1}\,.
\end{equation}
As an important special case, we formulate the following concequence.
\begin{cor}\label{cor:designer}
Assume $t\geq d\geq 2$. If $\gamma_t$ is a $t$-design curve, then 
\[
    \ell(\gamma_t) \ge  \frac{\sqrt{d+2}}{2\, d!}  \,t^{d-1}\,.
\]
\end{cor}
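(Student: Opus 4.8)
The plan is to obtain this statement as an immediate specialization of Theorem \ref{tm:weighted curves}. First I would recall that a $t$-design curve is, by definition, a weighted \mz\ curve with $a=A=1$ and constant weight $w_t\equiv 1$: in that case the normalized line integral $\frac{1}{\ell(\gamma_t)}\int_{\gamma_t}|f|^2$ reproduces $\|f\|_{L^2(\S^d)}^2$ exactly for every $f\in\Pi_t$, which is precisely the two-sided \mz\ inequality with $a=A=1$. Thus the hypotheses of Theorem \ref{tm:weighted curves} are met under the assumption $t\geq d\geq 2$ stated here, and I may apply \eqref{eq:in weight curve} directly.

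Next I would substitute $a=A$ into \eqref{eq:in weight curve}. This makes the factor $\bigl(\tfrac aA\bigr)^{1+\frac{1}{2t}}$ equal to $1$, so the bound collapses to $\ell(\gamma_t)\geq \frac{\dimp}{t}\,\frac{\sqrt{d+2}}{4}$. Invoking the elementary dimension estimate \eqref{eq:estimate dimp}, namely $\dimp\geq \frac{2}{d!}\,t^d$, and hence $\frac{\dimp}{t}\geq \frac{2}{d!}\,t^{d-1}$, I obtain
\[
\ell(\gamma_t)\;\geq\;\frac{2}{d!}\,t^{d-1}\cdot\frac{\sqrt{d+2}}{4}\;=\;\frac{\sqrt{d+2}}{2\,d!}\,t^{d-1}\,,
\]
which is the claimed inequality. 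Equivalently, one can simply read off the already-recorded quantitative form \eqref{eq:dim to td} at $a/A=1$, which gives the same bound.

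There is no substantive obstacle here; the only point that deserves an explicit sentence is the translation between the two vocabularies, i.e.\ that the spherical $t$-design (exact quadrature along the curve) condition is exactly the \mz\ normalization $a=A=1$, $w_t\equiv 1$ used in Theorem \ref{tm:weighted curves}. Once that identification is made, the corollary is a one-line consequence of that theorem together with \eqref{eq:estimate dimp}.
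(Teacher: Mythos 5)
Your proposal is correct and follows exactly the paper's own route: specialize Theorem \ref{tm:weighted curves} to $a=A=1$ and $w_t\equiv 1$, then apply the dimension estimate \eqref{eq:estimate dimp} (equivalently, read off \eqref{eq:dim to td} at $a/A=1$). No further comment is needed.
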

\begin{proof}[Proof of Corollary \ref{cor:designer}]
Any $t$-design curve  has \mz\ constants $a = A = 1$, so that \eqref{eq:dim to td} provides the estimate.
\end{proof}
This was proved in \cite{EG:2023} for a generic constant with a
different method that requires exact quadrature. The novelty here is
the explicit constant that shows 
the dependence of the dimension.

\begin{proof}[Proof of Theorem \ref{tm:weighted curves}]
We will apply Theorem \ref{mutmeas}  and use the relation 
$r^2 =  (1-\nu)\frac{(d+2)}{t(t+d)}$ from \eqref{eq:r2}, with a value of $r=r(d,t,\nu)\in(0,\frac{\pi}{2}]$ to be determined by $\nu\in(0,1)$. 

\subsubsection*{Consequences of the upper volume bound:\\}
The upper volume bound in Theorem \ref{mutmeas} leads to
\begin{equation*}
\mu_t(B_{r}(x))=\frac{1}{\ell(\gamma_t)} \int_{\gamma_t} 1_{B_{r}(x)} w_t \leq \frac{A}{\nu ^2 \dimp }\,,
\end{equation*}
which yields the inequality
\begin{equation}\label{eq:low bound curve}
\ell(\gamma_t)\geq \frac{\nu ^2\, \dimp }{A}\int_{\gamma_t} 1_{B_{r}(x)}w_t\,.
\end{equation}
It remains to find an estimate on $\int_{\gamma_t} 1_{B_{r}(x)}w_t$ for some $x\in\S^d$.  
Next, we determine a radius $r$ by choosing $\nu$ such that $\gamma_t$ is not contained in $B_r(x)$.

\subsubsection*{Determine $r$ by choosing $\nu$ such that $\gamma_t\not\subseteq B_r(x)$:\\}
To ensure that $\gamma_t$ is not contained in any spherical cap $B_r(x)$, we must make sure that the radius $r=r(d,t,\nu)$ violates \eqref{eq:sin r} in Lemma \ref{lemma:ally2}, i.e., 
we need $(\sin r)^{2t} <  \frac{a}{\mu_t(\S^d)} \,\Lambda_{d,t}$,
where $\Lambda_{d,t}$ is as in \eqref{eq:def Lambda}. Since  $(\sin
r)^{2t} \leq r^{2t}$, $\Lambda_{d,t}\geq \big(\frac{1}{d+1}\big)^t$ by \eqref{eq:ineq for Lambda}, and the upper \mz\ inequality yields
$\mu_t(\S^d)\leq A$, it suffices to satisfy  
the slightly stronger inequality 
\begin{equation*}
r^{2t} <  \frac{a}{A}\,\Big(\frac{1}{d+1}\Big)^t\,.
\end{equation*}
This leads to the requirement 
 \begin{align*}
r^{2} =  (1-\nu) \frac{(d+2)}{\big(t(t+d)\big)}  < \Big(\frac{a}{A}\Big)^{1/t}\, \frac{1}{d+1}\,,
 \end{align*}
which is equivalent to 
 \begin{equation}\label{eq:not contained condition}
 1-\nu   < \Big(\frac{a}{A}\Big)^{1/t}\,\,\frac{t(t+d)}{(d+2)(d+1)}\,.
 \end{equation}
 For 
 $t\geq d\geq 2$, we obtain  
 \begin{equation} \label{nee1}
\frac{t(t+d)}{(d+2)(d+1)}
\geq \frac{2}{3}\,,
\end{equation}
 and we may now choose $\nu$ as 
\begin{equation*}
1-\nu =  \frac{1}{2}\Big(\frac{a}{A}\Big)^{1/t}\,.
\end{equation*}
For this value of $\nu $ we  know that $(\sin r)^{2t} <  \frac{a}{\mu_t(\S^d)}\, \Lambda_{d,t}$. Hence, $\gamma_t$ cannot be contained in any ball of radius $r$.

\subsubsection*{Find $x_t$ such that $\int_{\gamma_t} 1_{B_{r}(x_t)}w_t \geq 2r a$:\\}
Let $x$ lie on the trajectory of $\gamma_t$. Since $\gamma_t$ is a closed curve not contained in $B_r(x)$, it must enter and leave the ball, hence, at least a piece of length $2 r$ must be contained in $B_{r}(x)$. If $w_t\equiv 1$, then we are done, because $\int_{\gamma_t} 1_{B_{r}(x_t)}w_t \geq 2r$. If $w_t$ is not constant, however, it could be rather small on that particular segment and we cannot immediately deduce $\int_\gamma 1_{B_{r}(x)}w_t \geq 2r$.  
In the following we will argue that there must exist at least one $x_t$ on the trajectory of $\gamma_t$ such that $\int_{\gamma_t} 1_{B_{r}(x_t)}w_t$ is sufficiently big.
 
Recall that $L=\ell(\gamma_t)$ and let us extend the arc-length parametrization of the closed curve $\gamma_t$ periodically beyond the interval $[0,L]$. We also regard the characteristic function $1_{[-r,r]}$ as an $L$-periodic functions and now define $v:[0,L]\rightarrow\mathbb{R}$ by the cyclic convolution $v=(w_t\circ \gamma_t) * 1_{[-r,r]}$ given by
\begin{equation*}
v(u):=
\int_{0}^L w_t(\gamma_t(p))\, 1_{[-r,r]}(p-u)\,dp\,.
\end{equation*}
Integration over $[0,L]$ and $2r\leq L$ lead to
\begin{align*}
\int_0^L v(u)\,du 
& = \int_0^L \int_{0}^L w_t(\gamma_t(p))\, 1_{[-r,r]}(p-u)\,dp\,du
\\
& = \int_{0}^L w_t(\gamma_t(p)) \int_0^L \, 1_{[-r,r]}(p-u)\,du\,dp
\\
& = 2r \,\int_0^L w_t(\gamma_t(p))\,dp
\\
& = 2r \int_{\gamma_t}w_t \,.
\end{align*}
The lower \mz\ inequality implies $ \int_{\gamma_t} w_t\geq a\,L$, so
that we arrive at $\int_0^L v(u)\,du\geq 2raL$. Thus, there is
$u_t\in[0,L]$ such that $v(u_t)\geq 2r a$. We now select
$x_t:=\gamma_t(u_t)$. Since $\gamma_t(p)\in B_r(x_t)$ for
$p\in[u_t-r,u_t+r]$ and $w_t\geq 0$, we obtain 
\begin{equation}\label{eq:Ahlfors bound for curves}
\int_{\gamma_t} 1_{B_{r}(x_t)}w_t \geq  \int_{u_t-r}^{u_t+r} w_t(\gamma_t(p))\,dp = v(u_t)\geq 2r a\,.
\end{equation}

\subsubsection*{Final estimates:\\}
With the specified choice of $r$ and $\nu $ the lower  bound
\eqref{eq:low bound curve} turns into  
\begin{equation}\label{eq:low bound curve 2}
\ell(\gamma_t)\geq  \nu^2\, \dimp    \,\frac{a}{A}\,\,2r .
\end{equation}

Since $ \frac{a}{A}\leq 1$, we have
$\nu^2=\big(1-\frac{1}{2}\big(\frac{a}{A}\big)^{1/t}\big)^2\geq
\frac{1}{4}$ and  therefore 
\begin{align*}
r^2 &=  (1-\nu)\,\frac{d+2}{t(t+d)} = \frac{1}{t^2} (1-\nu)\,\frac{d+2}{1+d/t}\geq  \frac{1}{t^2} \frac{(1-\nu)}{2}\,(d+2)\,.
\end{align*}
We now insert $r$ into the bound \eqref{eq:low bound curve 2} and
substitute  $1-\nu=\frac{1}{2}\big(\frac{a}{A}\big)^{1/t}$ to get
\begin{align*}
\ell(\gamma_t) & \geq  \frac{1}{4}\frac{\dimp}{t} \,\frac{a}{A}\, 2\sqrt{\frac{(1-\nu)}{2}(d+2)} \\
& = \frac{1}{4} \frac{\dimp}{t} \,\frac{a}{A}\, \sqrt{d+2}\,\Big(\frac{a}{A}\Big)^{\frac{1}{2t}}\,,
\end{align*}
which concludes the proof.
\end{proof}
For the asymptotics we view the dimension  $d$ as fixed and let the
degree  $t$
grow. However, large  $t$  
is not essential for the bounds; the proof goes through with slightly
different constants, as is indicated by the extreme case $t=1$ of linear functions. 
\begin{rem}
If $(\gamma_1,w_1)$ is a weighted \mz\ curve of order $1$ on $\S^d$, then its length must satisfy
\begin{equation}\label{eq:in weight curve t=1}
\ell(\gamma_1)\geq  2\,\sqrt{d+1} \,\frac{d+1}{d+2}\,\Big(\frac{a}{A}\Big)^{3/2}\,.
\end{equation}
To see this, we may follow the proof of Theorem \ref{tm:weighted
  curves} with minor adjustments according to $t=1$. First, we have $\dim \Pi_1=d+2$ and define the radius $r$ as usual by $r^2=(1-\nu)\frac{(d+2)}{t(t+d)}=(1-\nu)\frac{d+2}{d+1}$. For an arbitrary parameter $\delta\in(0,1)$, we choose $1-\nu=
\delta\,\frac{a}{A}\,\frac{1}{d+2}$. Then \eqref{eq:not contained condition} is satisfied for $t=1$. Consequently, $\gamma_1$ is not contained in any ball of radius $r$, where $r^2=\delta\,\frac{a}{A} \,\frac{1}{d+1}$. 
Thus, the inequality \eqref{eq:low bound curve 2} holds, i.e., 
\begin{equation*}
\ell(\gamma_1)  \geq  \nu^2\, (d+2)   \,\frac{a}{A}\,\,2r\,.
\end{equation*}
Since  $\nu^2= (1-\delta\,\frac{a}{A}\,\frac{1}{d+2})^2 \geq
\big(\frac{d+1}{d+2}\big)^2$ and $r=\sqrt{\delta\,\frac{a}{A} \,\frac{1}{d+1}}$, we eventually derive
\begin{align*}
\ell(\gamma_1) & \geq  
\frac{(d+1)^{3/2}}{d+2}\,\Big(\frac{a}{A}\Big)^{3/2}\,\,2 \sqrt{\delta} \,.
\end{align*}
Since this inequality holds for all $\delta\in(0,1)$, we obtain \eqref{eq:in weight curve t=1}.
\end{rem}

\section{Sets of arbitrary Hausdorff dimension}\label{sec:Hausdorff}
While piecewise smooth  curves 
have Hausdorff dimension $1$, one may also consider measures that are
supported on 
$s$-dimensional subsets of the sphere. In this section we establish
lower bounds  on their Hausdorff measure for arbitrary dimension $s<d$.

\subsection{$s$-dimensional \mz\ sets}
For $0\leq s\leq d$, we denote the $s$-dimensional Hausdorff measure by $h_s$. 
Every $s$-dimensional Hausdorff measurable subset $X_t\subseteq \S^d$
with positive, finite Hausdorff measure  $0<h_s(X_t)<\infty$ and an
integrable weight function $w_t:X_t\rightarrow[0,\infty)$ induces a
measure $\mu_t$ on $\sd $ defined by  $\mu_t(B_r(z))=\frac{1}{h_{s}(X_t)}\int_{X_t \cap B_r(z)}w_t(x)\, dh_s(x)$ and 
\begin{equation*}
\int_{\S^d} |f(x)|^2\,d\mu_t(x) = \frac{1}{h_{s}(X_t)}\int_{X_t} |f(x)|^2 w_t(x) \,d h_{s}(x)\,.
\end{equation*}
This enables us to define \mz\ sets $X_t$.
\begin{df}
We say that an $s$-dimensional Hausdorff measurable subset $X_t\subseteq \S^d$ with weight function $w_t:X_t\rightarrow[0,\infty)$ satisfies the \mz\ inequalities (of order $t$ or for $\Pi_t$) if
\begin{equation*}
a \|f\|_{L^2(\S^d)}^2 \stackrel{\text{lower}}{\le} \frac{1}{h_{s}(X_t)}\int_{X_t} |f(x)|^2 w_t(x) \,d h_{s}(x) \stackrel{\text{upper}}{\le} A \|f\|_{L^2(\S^d)}^2
\end{equation*}
holds for all $f\in\Pi_t$.
\end{df}

For such \mz\ sets we can prove  an extension of
Theorem~\ref{tm:weighted curves}  to the whole range of Hausdorff
dimensions  $0\leq s\leq d$. 
 
\begin{tm}[$s$-dimensional sets]\label{tm:s-dim}
Assume that $X_t\subseteq \S^d$ is an $s$-dimensional Hausdorff
measurable subset with weight function $w_t\geq 0$ satisfying the
upper \mz\ inequality of order $t$. If there exists a point $x_t\in
X_t$, parameters  $\nu\in(0,1)$ and $q>0$,   such that 
\begin{equation}\label{eq:Ahlfors3}
\int_{X_t\cap B_{r}(x_t)}w_t(x) \,dh_s(x) \geq q\, r^s\,,\qquad r^2 =  (1-\nu)\frac{(d+2)}{t(t+d)}\,,
\end{equation}
then the Hausdorff volume of $X_t$ satisfies
\begin{equation*}
h_{s}(X_t) \geq c \, \frac{\dimp}{t^{s}}\,,
\end{equation*}
with the ``constant''  $c=c(d,t,q,\nu,A,s)= \frac{q\, \nu^2}{A}\big(\frac{1-\nu}{1+d/t}\,(d+2)\big)^{s/2}$. 
\end{tm}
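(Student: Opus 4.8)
The plan is to recognize the claim as a direct application of Theorem~\ref{mutmeas} to the measure that the weighted set $(X_t,w_t)$ induces on the sphere. By construction, the assignment $\mu_t(B_r(z))=\frac{1}{h_s(X_t)}\int_{X_t\cap B_r(z)}w_t\,dh_s$ extends to a finite Borel measure $\mu_t$ on $\S^d$ (here $0<h_s(X_t)<\infty$ by assumption), and the hypothesis that $(X_t,w_t)$ satisfies the upper \mz\ inequality of order $t$ is precisely the statement that $\mu_t$ satisfies the upper \mz\ inequality with the same constant $A$. Hence Theorem~\ref{mutmeas} is applicable to $\mu_t$.

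Since the radius $r$ appearing in the Ahlfors-type hypothesis \eqref{eq:Ahlfors3} is defined by exactly the same formula $r^2=(1-\nu)\frac{d+2}{t(t+d)}$ as in \eqref{eq:r2}, Theorem~\ref{mutmeas} gives $\mu_t(B_r(x))\le \frac{A}{\nu^2\,\dimp}$ for every $x\in\S^d$, and in particular for the distinguished point $x_t$. Unwinding the definition of $\mu_t$ at $x_t$, this reads $\frac{1}{h_s(X_t)}\int_{X_t\cap B_r(x_t)}w_t\,dh_s\le \frac{A}{\nu^2\,\dimp}$. Combining it with the lower bound $\int_{X_t\cap B_r(x_t)}w_t\,dh_s\ge q\,r^s$ from \eqref{eq:Ahlfors3} and rearranging yields $h_s(X_t)\ge \frac{q\,\nu^2}{A}\,r^s\,\dimp$. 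It then only remains to substitute $r^s=t^{-s}\big(\tfrac{(1-\nu)(d+2)}{1+d/t}\big)^{s/2}$, which follows from writing $t(t+d)=t^2(1+d/t)$, to obtain $h_s(X_t)\ge c\,\dimp/t^s$ with $c$ exactly as stated.

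There is essentially no obstacle in the argument itself: the theorem is a corollary of the upper volume bound of Theorem~\ref{mutmeas}, and the only points requiring care are the bookkeeping that the induced measure genuinely inherits the upper \mz\ inequality, and the observation that the radius in \eqref{eq:Ahlfors3} is chosen to coincide with the one produced by Theorem~\ref{mutmeas} (so that the localization of $f^t_{x_t}$ is at the right scale). The one genuinely substantive issue lies outside this proof, namely in verifying that the Ahlfors-type condition \eqref{eq:Ahlfors3} actually holds at the scale $r\asymp t^{-1}$; this is automatic for point sets (any single atom already gives such a bound) and, for curves, it is precisely the estimate $\int_{\gamma_t}1_{B_r(x_t)}w_t\ge 2ra$ obtained en route to Theorem~\ref{tm:weighted curves}, so that $q=2a$ and $s=1$, cf.\ Remarks~\ref{rem:point zero} and~\ref{rem:curve and Alhfors}.
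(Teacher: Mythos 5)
Your proposal is correct and coincides with the paper's own proof: both apply the upper volume bound of Theorem~\ref{mutmeas} to the induced measure at the distinguished point $x_t$, combine it with the Ahlfors-type hypothesis \eqref{eq:Ahlfors3}, and substitute $r=\frac{1}{t}\sqrt{(1-\nu)\frac{d+2}{1+d/t}}$ to read off the constant $c$. Your closing remarks on where the real substance lies (verifying \eqref{eq:Ahlfors3} for points and curves) match the paper's Remarks~\ref{rem:point zero} and~\ref{rem:curve and Alhfors}.
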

In case $t=1$, we have $c=\frac{q\, \nu^2}{A}\big((1-\nu)\frac{d+2}{d+1}\big)^{s/2}$. 
For the standard case $t\geq d$, we can replace $c$ by its lower
estimate $c\geq \frac{q\,
  \nu^2}{A}\big(\frac{1-\nu}{2}\,(d+2)\big)^{s/2}$.
 
\begin{proof}[Proof of Theorem \ref{tm:s-dim}]
The radius $r$ is as needed in Theorem \ref{mutmeas} when applied to $\mu_t$, and we obtain
\begin{equation*}
\mu_t(B_{r}(x_t))=\frac{1}{h_s(X_t)} \int_{X_t\cap B_{r}(x_t)} w_t(x)\,d h_s(x)\leq \frac{A}{\nu ^2 \dimp }\,.
\end{equation*}
The assumption \eqref{eq:Ahlfors3} leads to 
\begin{align*}
h_{s}(X_t) & \geq \int_{X_t\cap B_{r}(x_t)} w_t(x)\,d h_s(x)\,\frac{\nu ^2 \dimp}{A}\\
& \geq q \,r^{s}\,\frac{\nu ^2 \dimp}{A}\,.
\end{align*}
Since $r= \frac{1}{t}\sqrt{(1-\nu)\frac{d+2}{1+d/t}}$,   
we may define the constant $c:=\frac{q\, \nu^2}{A}\big(\frac{1-\nu}{1+d/t}\,(d+2)\big)^{s/2}$ and deduce 
\begin{align*}
h_{s}(X_t)& \geq  c\,  \frac{\dimp}{t^{s}}\,,
\end{align*}
which concludes the proof.
\end{proof}

\begin{rem}
  The technical condition~\eqref{eq:Ahlfors3} is related to the notion
  of Ahlfors regularity from geometric measure theory. 
A Hausdorff measurable subset $X\subseteq\S^d$ with weight function $w:X\rightarrow[0,\infty)$ is called lower Ahlfors $s$-regular if there is a constant $q>0$ such that 
\begin{equation}\label{eq:Ahlfors}
\int_{X\cap B_r(x)}w(y)\,dh_s(y) \geq q\, r^s\,,\qquad \forall \, x\in X\,,\quad 0<r<\diam(X)\,.
\end{equation}
Note, however, that  we need  this condition only at  a single point
$x_t\in X_t$ and for a single  radius $r$.  Since $q,\nu ,r$ are not independent, one could further optimize the
constant $c$ subject to $qr^s= \mathrm{const}$ and to the relation
between $r$ and $\nu $. 
\end{rem}
The   regularity is automatically satisfied for point sets
 and curves.
\begin{rem}[points]\label{rem:point zero}
For a finite point set $X_t$, the maximum $w_{\max}=\max_{x\in X_t} w_t(x)$ of the weights is attained at $x_{\max}\in X_t$ such that $w_{\max}=w_t(x_{\max})$. Then $X_t$ with weight function $w_t$ is lower Ahlfors $0$-regular at $x_{\max}$ for $q=w_{\max}$ and all $r\in(0,\pi]$. 
\end{rem}

\begin{rem}[curves]\label{rem:curve and Alhfors}
 For $s=1$, the
Hausdorff measure $h_1$ corresponds to the arc-length and
$\frac{\dimp}{t}\asymp t^{d-1}$. This  is consistent with the results
for \mz\ curves in Theorem \ref{tm:weighted curves}.

In light of Section~\ref{sec:curves}, the regularity assumption follows for curves directly from the lower \mz\ inequality. Indeed, the proof of Theorem \ref{tm:weighted curves} shows that the lower \mz\ inequality forces the weighted curve to be lower Ahlfors $1$-regular at least at some point $x_t$ for $q = 2a$, cf.~\eqref{eq:Ahlfors bound for curves}, with the radius $r$ specified in~\eqref{eq:r2} of Theorem \ref{mutmeas}. 
\end{rem}

\subsection{\mz\ points}
Let us now consider the case $s=0$ more closely. If a finite subset $X_t\subseteq \S^d$ with weight function $w_t$ satisfies the \emph{lower} \mz\ inequality of order $t$, then an elementary and well-known 
dimension count yields that the cardinality of $X_t$ must obey
\begin{equation}\label{eq:origin}
\#X_t \ge \dimp\,.
\end{equation}
The quotient $\frac{\# X_t}{\dimp}$ is usually referred to as the oversampling factor. 

Typically, the \emph{lower} \mz\ inequalities are widely viewed as the
more substantial part, so that the \emph{upper} \mz\ bound often
receives comparatively little attention in the literature. In contrast
to $\mathbb{R}^d$ or other non-compact manifolds, the sphere $\S^d$ is compact and hence a confined space, in which the local
density of \emph{upper} \mz\ points cannot exceed the (scaled) oversampling
factor $\frac{\# X_t}{\dimp} \,A$. This also provides a lower bound on the cardinality. 
\begin{cor}\label{cor:finito}
If a finite subset $X_t\subseteq\S^d$ with weight function $w_t:X_t\rightarrow[0,\infty)$ satisfies the upper \mz\ inequality of order $t$, then
\begin{equation*}
w_{\max}=\max_{x\in X_t}w_t(x)\leq \frac{\#X_t}{\dimp}\,A\, .
\end{equation*}
 Equivalently, it must hold
\begin{equation}\label{eq:low numbers}
\#X_t\geq \dimp\,\frac{w_{\max}}{A}\,.
\end{equation}
\end{cor}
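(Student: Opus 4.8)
The plan is to apply Theorem~\ref{tm:s-dim} (or equivalently rerun the argument behind Theorem~\ref{mutmeas}) in the degenerate case $s=0$, where the Hausdorff measure $h_0$ is the counting measure and $r^0=1$ for every radius $r$. First I would invoke Remark~\ref{rem:point zero}: the finite point set $X_t$ with weights $w_t$ is lower Ahlfors $0$-regular at the point $x_{\max}$ where the weight $w_{\max}=\max_{x\in X_t}w_t(x)$ is attained, with constant $q=w_{\max}$ valid for any radius $r$. Thus the hypothesis \eqref{eq:Ahlfors3} of Theorem~\ref{tm:s-dim} holds for $s=0$ with this $q$, any $\nu\in(0,1)$, and $x_t=x_{\max}$.

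Next, applying Theorem~\ref{tm:s-dim} with $s=0$ gives $h_0(X_t)=\#X_t \geq c\,\dimp/t^0 = c\,\dimp$ with $c=\frac{q\,\nu^2}{A}\big(\tfrac{1-\nu}{1+d/t}(d+2)\big)^{0/2}=\frac{w_{\max}\,\nu^2}{A}$. Letting $\nu\to 1$ (which is legitimate since the hypothesis holds for every $\nu\in(0,1)$ and the resulting inequality is continuous in $\nu$) yields $\#X_t \geq \dimp\,\frac{w_{\max}}{A}$, which is \eqref{eq:low numbers}. Rearranging gives the equivalent statement $w_{\max}\leq \frac{\#X_t}{\dimp}\,A$.

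Alternatively, and perhaps cleaner, I would give a direct argument mirroring the proof of Theorem~\ref{mutmeas}: the weighted point set induces the measure $\mu_t$ with $\mu_t(B_r(x))=\frac{1}{\#X_t}\sum_{y\in X_t\cap B_r(x)}w_t(y)$, and the upper \mz\ inequality gives $\mu_t(B_r(x))\leq \frac{A}{\nu^2\dimp}$ for $r$ as in \eqref{eq:r2}. Choosing $x=x_{\max}$ and noting $B_r(x_{\max})$ contains at least the point $x_{\max}$ itself, we get $\frac{w_{\max}}{\#X_t}\leq \mu_t(B_r(x_{\max}))\leq \frac{A}{\nu^2\dimp}$; letting $\nu\to 1$ produces $\frac{w_{\max}}{\#X_t}\leq \frac{A}{\dimp}$, hence the claim.

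There is essentially no obstacle here — the statement is a corollary precisely because the $s=0$ case collapses all the scale-dependent factors in Theorem~\ref{tm:s-dim} to $1$. The only point requiring a word of care is the passage $\nu\to 1$: one should note that the bound \eqref{eq:7} holds for every admissible $\nu\in(0,1)$ (with $r$ depending on $\nu$), and since $B_r(x_{\max})\ni x_{\max}$ for all such $r$, the inequality $\frac{w_{\max}}{\#X_t}\leq \frac{A}{\nu^2\dimp}$ persists as $\nu\uparrow 1$, giving the stated constant without the factor $\nu^{-2}$.
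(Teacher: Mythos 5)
Your first argument is exactly the paper's proof: invoke Remark~\ref{rem:point zero} to get lower Ahlfors $0$-regularity at $x_{\max}$ with $q=w_{\max}$, apply Theorem~\ref{tm:s-dim} with $s=0$ to obtain $\#X_t\geq \frac{w_{\max}\,\nu^2}{A}\dimp$ for every $\nu\in(0,1)$, and let $\nu\uparrow 1$. The alternative direct argument via Theorem~\ref{mutmeas} is just an unwinding of the same reasoning, so the proposal is correct and matches the paper's approach.
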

Thus, we have a lower bound on the cardinality comparable to
\eqref{eq:origin}. It is remarkable  that we need  not  assume  the lower \mz\ inequality. 

\begin{proof}
As mentioned in Remark \ref{rem:point zero}, the Ahlfors regularity assumption in the point $x_{\max}$ with $w_{\max}=w_t(x_{\max})$ is always satisfied for $q=w_{\max}$ and all $r\in(0,\pi]$. Thus, Theorem \ref{tm:s-dim} leads to 
\begin{equation*}
\# X_t\geq \frac{w_{\max}\,\nu^2}{A} \dimp\,,
\end{equation*}
for all $\nu\in(0,1)$. The inequality must then also be valid for $\nu=1$, so that 
\begin{equation*}
w_{\max}\leq \frac{\#X_t}{\dimp}\,A
\end{equation*}
must hold. 
\end{proof}

\section{Lower volume bounds}
This section complements the upper volume bounds of Theorem
\ref{mutmeas} by (almost) matching lower bounds. The arguments are
more involved and  we will use  precise
off-diagonal decay estimates of the kernel. 

As a preparation, we notice that the integral over a
rotation-invariant function on $\sd $ can be reduced to a
one-dimensional integral via the formula~\cite{Reimer}
\begin{equation}
  \label{eq:oc1}
  \int _{\sd } f(x\cdot y ) \, dy = \frac{\vol(\mathbb{S}^{d-1})}{\vol(\sd )}
  \int _0^\pi f(\cos \theta ) \, (\sin  \theta )^{d-1} \, d\theta  \, ,
\end{equation}
where $\vol(\mathbb{S}^{d-1}) := \frac{2\pi ^{d/2}}{\Gamma (d/2)}$
denotes the Euclidean non-normalized surface measure of the full
$d-1$-sphere.  To keep track of the many constants, we abbreviate the
coefficient in front of the integral by 
\begin{equation}\label{eq:theta def}
\Theta_{d}:=\frac{\vol(\mathbb{S}^{d-1})}{\vol(\sd )} =
\frac{\Gamma(\frac{d+1}{2})}{\sqrt{\pi}\Gamma(\frac{d}{2})}\,.
\end{equation}
The integral over a spherical cap can then be written as
\begin{equation}\label{eq:Br integrals}
  \int _{B_r (x) } f(x\cdot y ) \, dy = \Theta_d
  \int _0^r f(\cos \theta ) \, (\sin  \theta )^{d-1} \, d\theta  \, .
\end{equation}
In particular,  the volume of a spherical cap is
\begin{equation*}
|B_r(x)| = \Theta_d \int_0^r (\sin \theta)^{d-1}\,d\theta\,.
\end{equation*}
Since $\sin\theta\geq \theta - \theta^3/6\geq \theta(1-\frac{r^2}{6})$, for $0\leq \theta\leq r$, this leads to the elementary lower estimate
\begin{equation}\label{eq:low Br}
|B_r(x)| \geq \Theta_d \frac{\big(1-\tfrac{r^2}{6}\big)^{d-1}}{d}\,r^d
\end{equation}
that we use later.

\subsection{Off-diagonal decay of the reproducing kernel}\label{sec:prep}
We next complement Proposition \ref{cor:conc} on  the local
concentration of the reproducing kernel  by a result about  its off-diagonal decay.

In the proofs  we will use very strong, uniform estimates for Jacobi polynomials
from \cite{EMN}, see also \cite{Krasikov}. These estimates are
formulated for  \emph{orthonormal}  Jacobi 
polynomials and contain the  normalization factor   
\begin{equation*}
\kappa^{(\alpha,\beta)}(t) =
\frac{2^{\alpha+\beta+1}}{2t+\alpha+\beta+1}\,
\frac{\binom{t+\alpha}{t}}{\binom{t+\alpha+\beta}{t+\beta}}\, ,
\end{equation*}
so that the polynomials
\begin{equation*}
\frac{1}{\sqrt{\kappa^{(\alpha,\beta)}(t)}}P_t^{(\alpha,\beta)}
\end{equation*}
are  orthonormal on $[-1,1]$ with respect to the weight function
$(1-x)^{\alpha}(1+x)^{\beta}$.  
The \EMN\ bound \cite{EMN} for $P^{(\alpha,\beta )}_t$ with $\alpha,
\beta > -1/2$ is then
given by
\begin{equation}
  \label{eq:oc5}
(1-u^2)^{1/2} (1-u)^\alpha (1+u)^\beta
\frac{1}{\kappa^{(\alpha,\beta)}(t)}\, \big|P_t^{\alpha,\beta}(u)\big|^2 \leq
\frac{2e}{\pi}(2+\sqrt{\alpha ^2 + \beta ^2}) \, .
\end{equation}
For $(\alpha,\beta)=(\frac{d}{2},\frac{d}{2}-1)$ we abbreviate the
constant on  the right-hand side as 
\begin{equation}\label{eq:Md}
M_d=\frac{2e}{\pi}\Big(2+\sqrt{\frac{d^2}{2}-d+1}\Big)
\end{equation}
and note that  $M_d\leq \frac{e}{\pi}(4+\sqrt{2}\,d)$ and $M_d \asymp d$. 

We additionally introduce the constant  
\begin{equation}\label{eq:Omegad0}
\Omega_{d,t}:=\frac{2\,\kappa^{(\frac{d}{2},\frac{d}{2}-1)}(t)}{\binom{t+\frac{d}{2}}{t}^2}
\end{equation}
and recall $\Theta_{d}=
\frac{\Gamma(\frac{d+1}{2})}{\sqrt{\pi}\Gamma(\frac{d}{2})}\asymp
\sqrt{d}$ from \eqref{eq:theta def}. With all these constants the
off-diagonal decay of the reproducing kernel can be described as
follows. 
 
\begin{lemma}\label{lemma:asymptotics Darboux}
For all $0<r\leq \pi$, we have 
\begin{equation}  \label{eq:oc7}
\int_{\S^d\setminus B_{r}(x)} \big|\widetilde{P}_t^{(\frac{d}{2},\frac{d}{2}-1)}(x\cdot y)\big|^2\,dy \leq 
M_d\,\Theta_d \, \frac{ \Omega_{d,t}}{r}\,.
\end{equation}
\end{lemma}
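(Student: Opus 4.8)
The plan is to pass from the sphere to a one-dimensional integral via the zonal formula \eqref{eq:Br integrals}, to insert the \EMN\ bound \eqref{eq:oc5} for the Jacobi polynomial $P_t^{(\frac{d}{2},\frac{d}{2}-1)}$, and then to exploit an exact cancellation between the Jacobi weight and the surface factor $(\sin\theta)^{d-1}$. Subtracting \eqref{eq:Br integrals} from \eqref{eq:oc1} gives
\[
\int_{\S^d\setminus B_{r}(x)}\big|\widetilde{P}_t^{(\frac{d}{2},\frac{d}{2}-1)}(x\cdot y)\big|^2\,dy
 = \Theta_d\int_r^\pi \big|\widetilde{P}_t^{(\frac{d}{2},\frac{d}{2}-1)}(\cos\theta)\big|^2(\sin\theta)^{d-1}\,d\theta\,,
\]
so it suffices to bound the $\theta$-integral by $M_d\,\Omega_{d,t}/r$.

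First I would rewrite \eqref{eq:oc5} with $(\alpha,\beta)=(\frac{d}{2},\frac{d}{2}-1)$ as the pointwise estimate
\[
\big|P_t^{(\frac{d}{2},\frac{d}{2}-1)}(u)\big|^2 \le \frac{M_d\,\kappa^{(\frac{d}{2},\frac{d}{2}-1)}(t)}{(1-u^2)^{1/2}(1-u)^{d/2}(1+u)^{d/2-1}}\,,
\]
divide by $P_t^{(\frac{d}{2},\frac{d}{2}-1)}(1)^2=\binom{t+\frac{d}{2}}{t}^2$ to get the same bound for $|\widetilde{P}_t^{(\frac{d}{2},\frac{d}{2}-1)}(u)|^2$, and substitute $u=\cos\theta$ using $1-u=2\sin^2(\theta/2)$, $1+u=2\cos^2(\theta/2)$, and $(1-u^2)^{1/2}=\sin\theta=2\sin(\theta/2)\cos(\theta/2)$. \emph{This is the crux of the argument:} the Jacobi weight then becomes $2^d\sin^{d+1}(\theta/2)\cos^{d-1}(\theta/2)$, while $(\sin\theta)^{d-1}=2^{d-1}\sin^{d-1}(\theta/2)\cos^{d-1}(\theta/2)$, so their quotient is simply $2\sin^2(\theta/2)$ and all the $d$-dependence collapses into constants:
\[
\big|\widetilde{P}_t^{(\frac{d}{2},\frac{d}{2}-1)}(\cos\theta)\big|^2(\sin\theta)^{d-1}
 \le \frac{M_d\,\kappa^{(\frac{d}{2},\frac{d}{2}-1)}(t)}{2\binom{t+\frac{d}{2}}{t}^2}\,\frac{1}{\sin^2(\theta/2)}
 = \frac{M_d\,\Omega_{d,t}}{4}\,\frac{1}{\sin^2(\theta/2)}\,,
\]
where the last equality is the definition \eqref{eq:Omegad0} of $\Omega_{d,t}$.

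It then remains to integrate the elementary bound: $\int_r^\pi \csc^2(\theta/2)\,d\theta = \big[-2\cot(\theta/2)\big]_r^\pi = 2\cot(r/2)$, and $\cot(r/2)\le 2/r$ for $0<r\le\pi$ since $\tan x\ge x$ on $[0,\pi/2)$. Hence the $\theta$-integral is at most $\tfrac{M_d\,\Omega_{d,t}}{4}\cdot 2\cdot\tfrac{2}{r}=\tfrac{M_d\,\Omega_{d,t}}{r}$, and multiplying by $\Theta_d$ gives \eqref{eq:oc7}. The individual steps are mechanical; the only points needing attention are arranging the half-angle exponents to cancel exactly — this is where the special parameters $(\frac{d}{2},\frac{d}{2}-1)$ and the precise power $d-1$ of $\sin\theta$ conspire to make the remaining integral dimension-free — and using the sharp bound $\cot(r/2)\le 2/r$ instead of the cruder $\cot(r/2)\le \pi/r$, so that the final constant is exactly $M_d\Theta_d\Omega_{d,t}/r$. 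One should also record that \eqref{eq:oc5} requires $\alpha,\beta>-\tfrac12$, which holds here precisely because $d\ge 2$.
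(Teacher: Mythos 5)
Your proposal is correct and follows essentially the same route as the paper: insert the \EMN\ bound, reduce to a one-dimensional integral via \eqref{eq:Br integrals}, observe that for $(\alpha,\beta)=(\frac{d}{2},\frac{d}{2}-1)$ the Jacobi weight cancels the surface factor $(\sin\theta)^{d-1}$ up to the factor $\tfrac{1}{2}\csc^2(\theta/2)$, and integrate using $\cot(r/2)\le 2/r$. The only cosmetic difference is that you carry out the cancellation via half-angle identities while the paper writes the weight as $(1-\cos\phi)^{\beta-\alpha}(\sin\phi)^{-2\beta-1}$ and cancels the sines directly; the constants match exactly.
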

In the proof we will find 
\begin{equation}\label{eq:Omegad}
\Omega_{d,t}
=\frac{2^{{d+1}}}{\binom{d}{\frac{d}{2}}}\,\frac{1}{(t+\frac{d}{2})\,\dimp}\,,
\end{equation}
so that 
$\Omega_{d,t} = \mathcal{O}(t^{-(d+1)})$. Thus, the mass of the
normalized Jacobi kernel outside a spherical cap of radius $r$ is of
order  $t^{-(d+1)}$.
\begin{proof}
The \EMN\ bound \eqref{eq:oc5} implies for the standard Jacobi
polynomial $P_t^{(\alpha,\beta)}$ that 
\begin{align*}
\left|P_t^{(\alpha,\beta)}(u)\right|^2&\leq \kappa^{(\alpha,\beta)}(t)\,M_d (1-u)^{-\alpha-\frac{1}{2}}(1+u)^{-\beta-\frac{1}{2}}\\
& = \kappa^{(\alpha,\beta)}(t)\,M_d (1-u)^{\beta-\alpha}(1-u^2)^{-\beta-\frac{1}{2}}\,.
\end{align*}
For the normalized Jacobi polynomial $\widetilde{P}^{(\alpha,\beta)}_t$ and $x\cdot y = \cos \phi $, this leads to
\begin{align*}
\big|\widetilde{P}^{(\alpha,\beta)}_t(x\cdot y)\big|^2&=\frac{1}{\binom{t+\alpha}{t}^2} \big|P_t^{(\alpha,\beta)}(\cos\phi)\big|^2\\
&\leq M_d\,\frac{\kappa^{(\alpha,\beta)}(t) }{\binom{t+\alpha}{t}^2} \, (1-\cos\phi)^{\beta-\alpha}(\sin\phi)^{-2\beta-1}\,.
\end{align*}
Note that $y\in \S^d\setminus B_{r}(x)$ translates
into $r<\phi<\pi $.  By
switching the integration from $y$ to $\phi$ and using
\eqref{eq:Br integrals}, 
we  obtain  
\begin{align*}
\int_{\S^d\setminus B_{r}(x)} \big|\widetilde{P}_t^{(\alpha,\beta)}(x\cdot y)\big|^2\,dy 
& \leq  M_d\,\Theta_d\,\frac{\kappa^{(\alpha,\beta)}(t) }{\binom{t+\alpha}{t}^2}  \int_{r}^{\pi} (1-\cos\phi)^{\beta-\alpha}(\sin \phi)^{-2\beta-1}(\sin\phi)^{d-1} \,d\phi\,.
\end{align*}
For $(\alpha,\beta)=(\frac{d}{2},\frac{d}{2}-1)$, the sines cancel each other, so that the integral reduces to
\begin{align*}
\int_{r}^{\pi} (1-\cos\phi)^{-1} \,d\phi = \tfrac{1}{2} \int_{r}^{\pi } (\sin\tfrac{\phi}{2})^{-2} \,d\phi
& = \left(-\cot(\tfrac{\pi}{2}) +(\cot\tfrac{r}{2})\right) =\cot\tfrac{r}{2}
 \leq 2/r\,.
\end{align*}
The factor of interest is 
\begin{align*}
\frac{\kappa^{(\alpha,\beta}(t)}{\binom{t+\alpha}{t}^2}
& =
      \frac{2^{\alpha+\beta+1}}{2t+\alpha+\beta+1}
                                                \,\, 
                                          \frac{1}{\binom{t+\alpha}{t}\,\binom{t+\alpha+\beta}{t+\beta}}\, ,
\end{align*}
and, for $(\alpha,\beta)=(\frac{d}{2},\frac{d}{2}-1)$ and after some
manipulations of binomial coefficients, it  reduces to  
\begin{align*}
\tilde{\Omega}_{d,t}:=\frac{\kappa^{(\frac{d}{2},\frac{d}{2}-1)}(t)}{\binom{t+\frac{d}{2}}{t}^2} & =
\frac{2^{d}}{(2t+d)\, \binom{t+\frac{d}{2}}{t}\,\binom{t+d-1}{t+\frac{d}{2}-1}}\\
&= \frac{2^{d}}{(2t+d)}\,\frac{(t+d)}{(t+\frac{d}{2})}\, \frac{1}{\binom{t+d}{t}\,\binom{d}{\frac{d}{2}}}\\
& = \frac{2^{{d}}}{\binom{d}{\frac{d}{2}}}\,\frac{1}{(t+\frac{d}{2})\,\dimp}\,,
\end{align*}
where the latter equality is due to $\dimp = \binom{t+d}{t}\,\frac{2t+d}{t+d}$ as stated in \eqref{eq:dimp}.

Using the abbreviations for the three constants, we obtain
\begin{equation*}
\int_{\S^d\setminus B_{r}(x)} \big|\widetilde{P}_t^{(\frac{d}{2},\frac{d}{2}-1)}(x\cdot y)\big|^2\,dy
 \leq M_d  \,\Theta_d \,\tilde{\Omega}_{d,t}\, \frac{2}{r} \, .
\end{equation*}
Finally, set $\Omega_{d,t}=2\tilde{\Omega}_{d,t}$.
\end{proof}

\subsection{Quantitative lower volume bounds}\label{sec:lower volume bounds}
We now verify that \mz\ measures must have enough mass in any spherical cap of sufficient size. The proof relies on the off-diagonal decay estimates in Lemma \ref{lemma:asymptotics Darboux} and also makes use of the upper volume bounds in Theorem \ref{mutmeas}.
\begin{tm}[lower volume bounds]\label{main}
For $\epsilon \in(0,1)$, we define the constant
\begin{equation}\label{eq:Vd}
V_{d}=
\frac{A/a}{(1-\epsilon)} \,\pi \,M_d\,d^2 \,2^{\frac{d}{2}+6}(\sqrt{d}/e)^{d} \,,
\end{equation}
where $M_d$ is the \EMN\ bound as in \eqref{eq:Md}. 
If $\mu_t$ is a \mz\ measure of order $t$ on $\S^d$ and 
\begin{equation*}
R \geq  \frac{V_{d}}{t}\,,
\end{equation*}
then $\mu_t$ satisfies the lower volume bound
  $$
    \mut \big( B_{R} (z) )
    \geq \frac{a\,\epsilon}{\dimp} \,, \qquad
    \forall\, z\in \sd \, .
$$
\end{tm}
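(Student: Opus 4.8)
The plan is to establish the lower bound on $\mu_t(B_R(z))$ by a localization argument: I test the \mz\ inequalities against the reproducing kernel $k^t_z$ centered at $z$, split the resulting integral into the part over $B_R(z)$ and the part over its complement, bound the complement using the off-diagonal decay from Lemma \ref{lemma:asymptotics Darboux}, and solve for $\mu_t(B_R(z))$. Concretely, apply the lower \mz\ inequality to $f=f^t_z=\widetilde{P}_t^{(\frac d2,\frac d2-1)}(z\cdot \,\cdot\,)$, which has $\|f^t_z\|_{L^2(\S^d)}^2=1/\dimp$ by \eqref{eq:norm of f}. This gives
\begin{equation*}
\frac{a}{\dimp}\leq \int_{\S^d}|f^t_z(y)|^2\,d\mu_t(y)=\int_{B_R(z)}|f^t_z|^2\,d\mu_t+\int_{\S^d\setminus B_R(z)}|f^t_z|^2\,d\mu_t\,.
\end{equation*}
Since $|f^t_z(y)|\leq |f^t_z(z)|=1$ everywhere (the maximum of $|\widetilde P^{(\alpha,\beta)}_t|$ is attained at $1$ for these parameters, already used in the proof of Lemma \ref{l0}), the first term is at most $\mu_t(B_R(z))$. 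The second term is the real work.

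For the tail term I want to transfer the off-diagonal $L^2$-bound for the \emph{surface} measure in Lemma \ref{lemma:asymptotics Darboux} into an $L^\infty$-type bound against $\mu_t$. The natural route is to cover $\S^d\setminus B_R(z)$ by caps of radius $\asymp R$ (or to use a layer-cake/dyadic decomposition of the region $R<\phi<\pi$ into annuli), on each of which $|f^t_z|^2$ is controlled pointwise by its maximum there, and then bound the $\mu_t$-mass of each cap by the \emph{upper} volume bound of Theorem \ref{mutmeas} — this is exactly why the theorem needs $\mu_t$ to be a full \mz\ measure and not merely to satisfy the lower inequality. Using the \EMN-type pointwise estimate
\begin{equation*}
|\widetilde{P}_t^{(\frac d2,\frac d2-1)}(\cos\phi)|^2\leq M_d\,\tilde\Omega_{d,t}\,(\sin\phi)^{-d}\quad(\text{more precisely }(1-\cos\phi)^{\beta-\alpha}(\sin\phi)^{-2\beta-1}=(\sin\phi)^{-d}\text{ after simplification}),
\end{equation*}
on an annulus $\{\,\phi\in[\rho,2\rho]\,\}$ with $\rho\geq R$ one gets $|f^t_z|^2\lesssim M_d\tilde\Omega_{d,t}\rho^{-d}$, while Theorem \ref{mutmeas} bounds the $\mu_t$-mass of a cap of radius $\asymp\rho$ by $C(d)A/\dimp$ (valid only for $\rho\lesssim t^{-1}$). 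Summing a dyadic collection of such annuli from $\rho\sim t^{-1}$ up to $\rho\sim\pi$, together with the coarse bound $\mu_t(\S^d)\leq A$ for the outermost region where $\phi$ is bounded away from $0$, yields a tail estimate of the shape $C(d)\,A\,M_d\,\tilde\Omega_{d,t}/R^{?}$ against $1/\dimp$; recalling $\tilde\Omega_{d,t}=\mathcal{O}(t^{-(d+1)})$ and $\dimp\asymp t^d$, the tail is $\mathcal O\big(A\,M_d/(t\,R^{?})\big)\cdot\dimp^{-1}$ up to dimensional constants, so choosing $R\geq V_d/t$ with $V_d$ large enough (absorbing all the $d$-dependent constants, the $M_d$, and the ratio $A/a$) forces the tail term to be at most $\frac{a(1-\epsilon)}{\dimp}$. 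Rearranging the displayed inequality then gives $\mu_t(B_R(z))\geq \frac{a}{\dimp}-\frac{a(1-\epsilon)}{\dimp}=\frac{a\epsilon}{\dimp}$, as claimed.

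The main obstacle is the bookkeeping in the tail estimate: one must (i) correctly balance the two regimes — caps of radius $\lesssim t^{-1}$, where Theorem \ref{mutmeas} applies with a good $A/\dimp$ bound, versus the region where the pointwise kernel bound $(\sin\phi)^{-d}$ is already small but only the crude $\mu_t(\S^d)\leq A$ is available — and check that the crossover happens at scale $t^{-1}$ so that neither regime dominates badly; (ii) track \emph{every} dimensional constant (the $\Theta_d\asymp\sqrt d$, the $M_d\asymp d$, the Stirling factor $\binom{d}{d/2}^{-1}\asymp 2^{-d}\sqrt d$ hidden in $\tilde\Omega_{d,t}$, and the $d$-dependence of the covering number of $\S^d$ by caps of radius $\asymp R$, which is where the $(\sqrt d/e)^d$ and $2^{d/2+6}$ factors in \eqref{eq:Vd} come from) so that the final threshold $V_d$ has exactly the stated form; and (iii) verify that the radii of all the caps invoked in Theorem \ref{mutmeas} genuinely satisfy the constraint $r\leq C_2 t^{-1}$ there. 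Everything else is a direct substitution into the split inequality above.
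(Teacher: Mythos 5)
Your overall architecture---test with the kernel $f^t_z$, split the $\mu_t$-integral into $B_R(z)$ and its complement, control the tail using the upper volume bound of Theorem \ref{mutmeas} together with the \EMN\ estimate, and rearrange---is sound, but your route through the tail is genuinely different from the paper's. The paper does not test with a single kernel: it integrates the test center $y$ over a small cap $B_r(z)$ with $r\asymp t^{-1}$ (Lemma \ref{lemma:first step}) and then converts the $\mu_t$-integral of the far field into a \emph{surface-measure} integral by a Fubini--reflection argument (Lemma \ref{lemma:switch int}), so that only the integrated off-diagonal bound of Lemma \ref{lemma:asymptotics Darboux} is needed, applied once. Your dyadic covering of $\S^d\setminus B_R(z)$ by caps at scale $t^{-1}$ is a legitimate alternative transference mechanism, but it imports the covering multiplicity of $\S^d$ (itself exponential in $d$) into the constant and therefore will not reproduce the stated $V_d$; in the paper the factors $2^{\frac{d}{2}+6}(\sqrt{d}/e)^{d}$ do \emph{not} come from a covering number but from Stirling applied to $\Gamma(\tfrac d2+1)^2/(\tfrac d2+1)^{d/2}$, which arises from the ratio $\Omega_{d,t}/r^d$ with the specific choice $r=\sqrt{(d+2)/2}\,/t$.

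The concrete gap is your pointwise kernel bound. With $(\alpha,\beta)=(\tfrac d2,\tfrac d2-1)$ one has $\beta-\alpha=-1$ and $-2\beta-1=-(d-1)$, so \eqref{eq:oc5} gives $|f^t_z(y)|^2\lesssim M_d\,\Omega_{d,t}\,(1-\cos\phi)^{-1}(\sin\phi)^{-(d-1)}\asymp \phi^{-(d+1)}$ for small $\phi$, not $(\sin\phi)^{-d}$. This is not cosmetic: with your exponent $-d$, each dyadic annulus at angular distance $\rho$ contributes a constant amount $\asymp C(d)\,A\,M_d\,\Omega_{d,t}$ (pointwise sup $\asymp\rho^{-d}$ times $\mu_t$-mass $\lesssim C(d)A\rho^{d}$ from the covering), so summing over the $\asymp\log(\pi/R)$ annuli produces a factor $\log(\pi t/V_d)$, and the resulting condition has the form $t/\log t\gtrsim C(d)A/a$ rather than the clean threshold $R\ge V_d/t$ claimed in the theorem. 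With the correct exponent $-(d+1)$ the annulus contributions form a geometric series dominated by the innermost annulus, and you recover the $1/R$ decay that makes the threshold work. You must also treat the antipodal region separately, since $(\sin\phi)^{-(d-1)}$ blows up at $\phi=\pi$; the crude bound $\mu_t(\S^d)\le A$ suffices only where the \EMN\ bound is already $O(M_d\Omega_{d,t})$, and a second covering around $-z$ is needed near the antipode. Finally, as you note in (iii), Theorem \ref{mutmeas} applies only to caps of radius as in \eqref{eq:r2}, so all coverings must be performed at scale $t^{-1}$, never at scale $\rho$.
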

Theorem \ref{main} complements the upper volume bounds in Theorem \ref{mutmeas}. In particular for the radius $R\asymp t^{-1}$, the lower volume bound scales like $t^{-d}$, in agreement with the complementary upper bound in Theorem \ref{mutmeas}.

\begin{rem}[Requires $t\gg d$]
To ensure $R<\pi$, the term $t$ must counterbalance $V_{d}$. Yet,
$V_{d}$ grows rapidly with $d$, in fact, super-exponentially in $d$. 
Consequently, $t$ has to be large relative to $d$, otherwise the lower bound becomes trivial since every cap of radius at least $\pi$ contains the whole sphere.
\end{rem}

The proof of Theorem \ref{main} requires some preparations. We begin by applying the 
\mz\ inequalities to the normalized Jacobi polynomial $f^t_y \in \Pi_t$,
\begin{equation}
  \label{eq:3}
  f^t_y(x) = \widetilde{P}^{(\frac{d}{2},\,\frac{d}{2}-1)}_t(x \cdot y)\,.
\end{equation}
The \mz\ inequality then yields
\begin{equation}
  \label{eq:4}
  \frac{a}{\dimp} \leq \int_{\sd} |f^t_y(x)|^2 \, d\mu_t(x) \leq \frac{A}{\dimp}\,.
\end{equation}
We now integrate the parameter $y$ over a  small ball $B_{r}(z)$ for
some $r>0$ to be determined and obtain 
\begin{equation}
  \label{eq:5}
\frac{a|B_{r}|}{\dimp}   \leq  \int _{B_{r}
    (z)}\int _{\sd } |f\ty (x)|^2 \, d\mut (x) \, dy \leq
\frac{A|B_{r}|}{\dimp}\,.
\end{equation}

For $r\le R\le \pi$, we decompose the integration in the center variable $x$ into
$\S^d = B_R(z)\cup \big(\S^d\setminus B_{R}(z)\big)$ and define
\begin{align}
  I_{r,R}(z) &:= \int_{B_r(z)}\int_{B_R(z)} |f\ty(x)|^2 \, d\mut(x)\,dy,\,\\
  I\!I_{r,R}(z) &:= \int_{B_r(z)}\int_{\S^d\setminus B_{R}(z)} |f\ty(x)|^2 \, d\mut(x)\,dy\,.
  \label{eq:9}
\end{align}
 The above decomposition into near and far regions is  a first step
 towards a quantitative lower estimate of $\mu_t(B_R(z))$ by suitably estimating $I_{r,R}$ and rearranging terms.
\begin{lemma}\label{lemma:first step}
If $\mu_t$ satisfies the \emph{lower} \mz\ inequality of order $t$ on $\S^d$, then for every $r\leq R\le \pi$,
\[
  \mu_t\big(B_R(z)\big)\;\ge\; \frac{a}{\dimp}\;-\;\frac{I\!I_{r,R}(z)}{|B_r|}\,.
\]
\end{lemma}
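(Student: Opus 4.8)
The plan is to start from the two–sided estimate \eqref{eq:5}, which bounds the double integral $\int_{B_r(z)}\int_{\sd}|f\ty(x)|^2\,d\mut(x)\,dy$ from below by $a|B_r|/\dimp$. I would then split the inner integration domain $\sd$ into the near region $B_R(z)$ and the far region $\sd\setminus B_R(z)$, so that this double integral equals $I_{r,R}(z)+I\!I_{r,R}(z)$ in the notation of \eqref{eq:9}. Combining with the lower bound from \eqref{eq:5} gives
\[
I_{r,R}(z)\;\ge\;\frac{a|B_r|}{\dimp}\;-\;I\!I_{r,R}(z)\,.
\]

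The next step is to bound $I_{r,R}(z)$ from above by $|B_r|\,\mu_t(B_R(z))$. This follows by interchanging the order of integration (Tonelli, since the integrand is nonnegative) and using that $|f\ty(x)|=|\widetilde P^{(\frac d2,\frac d2-1)}_t(x\cdot y)|\le \widetilde P^{(\frac d2,\frac d2-1)}_t(1)=1$ for all $x,y\in\sd$; indeed, by Lemma~\ref{l0} (or already by the fact that the maximum of $|P^{(\alpha,\beta)}_t|$ on $[-1,1]$ is attained at $1$ in this parameter range, as used in its proof) the normalized Jacobi polynomial is bounded by $1$ in modulus. Hence
\[
I_{r,R}(z)=\int_{B_R(z)}\Big(\int_{B_r(z)}|f\ty(x)|^2\,dy\Big)\,d\mut(x)\le |B_r|\,\mu_t(B_R(z))\,.
\]

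Putting the two displays together yields $|B_r|\,\mu_t(B_R(z))\ge \frac{a|B_r|}{\dimp}-I\!I_{r,R}(z)$, and dividing by $|B_r|>0$ gives exactly the claimed inequality
\[
\mu_t\big(B_R(z)\big)\ge \frac{a}{\dimp}-\frac{I\!I_{r,R}(z)}{|B_r|}\,.
\]

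I do not expect any real obstacle here: this lemma is purely a bookkeeping step that isolates the far-region term $I\!I_{r,R}(z)$ as the only quantity still to be controlled. The genuine work — estimating $I\!I_{r,R}(z)$ via the off-diagonal decay of the kernel (Lemma~\ref{lemma:asymptotics Darboux}) together with the upper volume bound (Theorem~\ref{mutmeas}) and a suitable choice of $r$ relative to $R$ and $\epsilon$ — is deferred to the subsequent argument proving Theorem~\ref{main}. The one point requiring a word of care is the uniform bound $|\widetilde P^{(\frac d2,\frac d2-1)}_t|\le 1$ on $[-1,1]$, which is exactly the location-of-maximum fact for Jacobi polynomials with $\alpha\ge\beta>-1$ and $\max(\alpha,\beta)\ge-\tfrac12$ invoked in the proof of Lemma~\ref{l0}; since $(\alpha,\beta)=(\tfrac d2,\tfrac d2-1)$ satisfies these constraints for every $d\ge1$, the estimate applies without further hypotheses.
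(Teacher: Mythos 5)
Your argument is correct and coincides with the paper's own proof: both split the inner integral over $\S^d$ into $B_R(z)$ and its complement, bound $I_{r,R}(z)\le \mu_t(B_R(z))\,|B_r|$ using $|f^t_y(x)|\le 1$, and then rearrange the lower bound from \eqref{eq:5} and divide by $|B_r|$. Your extra remark justifying $|\widetilde P^{(\frac d2,\frac d2-1)}_t|\le 1$ via the location-of-maximum fact for Jacobi polynomials is a correct (and slightly more careful) spelling-out of the step the paper states without comment.
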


\begin{proof}
Since $|f\ty (x)|^2\leq 1$, we obtain
\begin{equation*}
I_{r,R}(z) \leq  \mu_t(B_{R}(z)) \,|B_{r}
    (z)|\,.
\end{equation*}

The lower bound in \eqref{eq:5} leads to 
\begin{align*}
\frac{a |B_{r}|}{\dimp} -I\!I_{r,R}(z) &\leq I_{r,R}(z)  
\leq \mu_t(B_{R}(z)) \,  |B_{r}|\, , 
\end{align*}
so that division by $|B_r|$ yields the claim.
\end{proof}
According to Lemma \ref{lemma:first step}, we now need 
a suitable upper bound on $I\!I_{r,R}$. 
\begin{lemma}\label{lemma:switch int}
Assume that $\nu \in(0,1)$ and $r>0$ are related by  $r^2 =  (1-\nu)\frac{(d+2)}{t(t+d)}$ as in \eqref{eq:r2} and let $r\leq R\leq\pi $. 
If $\mu_t$ satisfies the \emph{upper} \mz\ inequality of order $t$ on $\S^d$, then 
\begin{equation*}
I\!I_{r,R}(z) \leq \frac{A}{\nu ^2 \dimp } 
\int_{\S^d\setminus B_{R-r}(z)} |f^t_z(x)|^2 \, dx\,.
\end{equation*}
\end{lemma}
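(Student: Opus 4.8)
The plan is to use Fubini together with the symmetry $f^t_y(x)=f^t_x(y)$ of the reproducing kernel (both sides equal $\widetilde P^{(d/2,d/2-1)}_t(x\cdot y)$) to move the $\mu_t$-integration to the outside. Since $|f^t_y(x)|^2\le 1$ and both $\mu_t$ and the surface measure are finite, Fubini gives
\[
  I\!I_{r,R}(z)=\int_{\S^d\setminus B_R(z)} g(x)\,d\mu_t(x),\qquad g(x):=\int_{B_r(z)}|f^t_x(y)|^2\,dy .
\]
Two elementary facts about $g$ then carry the argument. First, if $d(x,z)\ge R$ and $y\in B_r(z)$, the triangle inequality on $\S^d$ forces $d(x,y)>R-r$, i.e.\ $B_r(z)\subseteq \S^d\setminus B_{R-r}(x)$; hence, using the rotation invariance of the kernel $f^t$ and of the surface measure,
\[
  g(x)\le \int_{\S^d\setminus B_{R-r}(x)}|f^t_x(y)|^2\,dy=\int_{\S^d\setminus B_{R-r}(z)}|f^t_z(y)|^2\,dy ,
\]
a bound independent of $x\in\S^d\setminus B_R(z)$. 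Second, the same inclusion and one further application of Fubini give $\int_{\S^d\setminus B_R(z)}g(x)\,dx\le |B_r|\int_{\S^d\setminus B_{R-r}(z)}|f^t_z(y)|^2\,dy$.

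Next I would feed in the upper volume bound of Theorem \ref{mutmeas}, which for the present $\nu$ and $r$ says $\mu_t(B_r(w))\le A/(\nu^2\dimp)$ for every $w\in\S^d$; equivalently $\mu_t$ is a Carleson measure whose density at the resolution scale $r$ is at most $A/(\nu^2\dimp\,|B_r|)$. Because $g$ is an average of $|f^t_{(\cdot)}|^2$ over a spherical cap of exactly this radius $r$, it is regular at scale $r$, and a Carleson comparison — cover $\S^d\setminus B_R(z)$ by caps $B_r(w_i)$ of bounded overlap, bound $\mu_t$ on each by Theorem \ref{mutmeas}, and control $\sup_{B_r(w_i)}g$ by its $L^1$-average over a slightly enlarged cap (equivalently: decompose $\S^d\setminus B_R(z)$ into geodesic annuli of width $r$, use $g(x)\le |B_r|\,\sup_{\psi\ge d(x,z)-r}|\widetilde P^{(d/2,d/2-1)}_t(\cos\psi)|^2$, bound each annulus' $\mu_t$-mass by Theorem \ref{mutmeas}, and recognise the result as a Riemann sum for $\int_{\S^d\setminus B_{R-r}(z)}|f^t_z|^2$) — then yields
\[
  \int_{\S^d\setminus B_R(z)}g(x)\,d\mu_t(x)\ \le\ \frac{A}{\nu^2\dimp\,|B_r|}\int_{\S^d\setminus B_R(z)}g(x)\,dx\ \le\ \frac{A}{\nu^2\dimp}\int_{\S^d\setminus B_{R-r}(z)}|f^t_z(x)|^2\,dx ,
\]
which is the assertion.

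The step that requires the most care is exactly this transfer of the integration from $\mu_t$ to the surface measure on the far region $\S^d\setminus B_R(z)$. The upper \mz\ inequality alone controls only the global $L^2$-mass $\int_{\S^d}|h|^2\,d\mu_t\le A\|h\|_{L^2}^2$ and says nothing about how $\mu_t$ distributes locally, so converting the off-diagonal decay of $f^t$ into a genuine gain over the naive bound $I\!I_{r,R}(z)\le A\int_{\S^d\setminus B_{R-r}(z)}|f^t_z|^2$ requires the local cap-mass estimate of Theorem \ref{mutmeas} — this is where the factor $\nu^{-2}$, and (through Proposition \ref{cor:conc}) the normalisation $\|f^t_w\|_{L^2}^2=1/\dimp$, enter — together with the observation that the averaging built into $g$ tames the oscillation of $\widetilde P^{(d/2,d/2-1)}_t$ at scale $t^{-1}$ so that the Carleson comparison can be run with the clean constant.
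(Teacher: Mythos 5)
Your opening moves are fine and match the spirit of the paper: Fubini plus the symmetry $f^t_y(x)=f^t_x(y)$, and the triangle-inequality inclusion $B_r(z)\subseteq \S^d\setminus B_{R-r}(x)$ for $x\notin B_R(z)$. The gap is in the final ``Carleson comparison.'' You assert
\[
\int_{\S^d\setminus B_R(z)}g(x)\,d\mu_t(x)\ \le\ \frac{A}{\nu^2\dimp\,\lvert B_r\rvert}\int_{\S^d\setminus B_R(z)}g(x)\,dx
\]
with the \emph{clean} constant $A/(\nu^2\dimp\,\lvert B_r\rvert)$, but neither of your two suggested implementations delivers it. A covering of $\S^d\setminus B_R(z)$ by caps $B_r(w_i)$ gives $\int g\,d\mu_t\le \frac{A}{\nu^2\dimp}\sum_i\sup_{B_r(w_i)}g$, and converting $\sum_i\sup_{B_r(w_i)}g$ into $\frac{1}{\lvert B_r\rvert}\int g\,dx$ costs both the overlap multiplicity of the cover and a sup-versus-average factor for $g$. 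The latter is not harmless here: $\lvert\widetilde P_t^{(d/2,d/2-1)}(\cos\psi)\rvert^2$ oscillates (its zeros are spaced like $t^{-1}$) at exactly the scale $r\asymp t^{-1}$ of your caps and annuli, so the ``upper Riemann sum'' you describe is not the integral with constant $1$; it exceeds it by a dimension-dependent factor that you would have to track. In a statement whose entire content is the explicit constant $A/(\nu^2\dimp)$, this is a real defect, not a cosmetic one. (Note also that a general inequality $\int h\,d\mu_t\le \frac{M}{\lvert B_r\rvert}\int h\,dx$ for measures with $\mu_t(B_r(w))\le M$ is false without regularity of $h$: averaging only yields $\int h\,d\mu_t\le \frac{M}{\lvert B_r\rvert}\int \sup_{B_r(y)}h\,dy$.)

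The paper sidesteps this entirely by one extra structural step that your proposal is missing: before invoking the cap bound, it applies the reflection $U$ exchanging $z$ and $x$ to rewrite the inner integral as $\int_{B_r(z)}\lvert f^t(y,x)\rvert^2dy=\int_{B_r(x)}\lvert f^t(y,z)\rvert^2dy$, i.e.\ the cap is re-centred at the $\mu_t$-variable $x$ while the kernel is re-centred at $z$. After $1_{B_r(x)}(y)=1_{B_r(y)}(x)$ and Fubini, the only object integrated against $d\mu_t$ is the indicator $1_{B_r(y)}\cdot 1_{\S^d\setminus B_R(z)}$, whose $\mu_t$-mass is bounded by $\mu_t(B_r(y))\le A/(\nu^2\dimp)$ directly from Theorem~\ref{mutmeas} — no covering, no maximal function, no loss. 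The weight $\lvert f^t(y,z)\rvert^2$ then sits outside, integrated against surface measure over $\S^d\setminus B_{R-r}(z)$, which is exactly the claimed right-hand side. I would suggest restructuring your argument along these lines; as written, the last inequality is not proved.
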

\begin{proof}
We first rewrite $I\!I_{r,R}(z)$ in \eqref{eq:9}. The integral over $y$ is treated first and the rewrite relies on applying a suitable reflection. 

For $v\in\S^d$, the Householder transform
$U=I-2vv^\top\in\mathcal{O}(d+1)$ is the reflection with respect to
the hyperplane orthogonal to $v$. Therefore, given $x,z\in\S^d$ and choosing $v=(z-x)/\|z-x\|$, the reflection satisfies $Uz=U^\top z=x$. The orthogonal invariance of
 $f^t(y,x)=\widetilde{P}^{(\frac{d}{2},\,\frac{d}{2}-1)}_t(x \cdot y)$
 then leads to 
 $$
 f^t(U y,x) = f^t(y, U^\top x) = f^t( y, z)\,, 
 $$
and $U^\top B_{r}  (z)=B_{r}  (U^\top z)=B_{r} (x)$ 
implies
\begin{align*} 
 \int _{B_{r}  (z)} |f^t(y, x)|^2 \, dy &=   \int _{U^\top B_{r}  (z)} |f^t(Uy, x)|^2 \, dy \\
 &=   \int _{B_{r}  (U^\top z)} |f^t(y, z)|^2 \, dy \\
  &=   \int _{B_{r}  (x)} |f^t(y, z)|^2 \, dy \,.
\end{align*}
The identity 
$1 _{B_{r}  (x)}(y) = 1_{B_{r} (y)}(x)$ yields the full expression for $I\!I_{r,R}$ as
\begin{align*}
  I\!I_{r,R}(z) 
  & = \int _{\S^d\setminus B_{R}(z)} \int _{B_{r }(x)}|f^t(y,z)|^2  \, dy  \, d\mut (x)  \\
  & =\int _{\S^d\setminus B_{R}(z)} \int_{\S^d}|f^t(y,z)|^2  1 _{B_{r}(x)}(y)\, dy  \, d\mut (x)  \\
  & =\int _{\S^d\setminus B_{R}(z)} \int_{\S^d}|f^t(y,z)|^2  1_{B_{r} (y)}(x)\, dy  \, d\mut (x)  \\
   & =\int_{\S^d} |f^t(y,z)|^2\int _{\S^d\setminus B_{R}(z)}   1_{B_{r } (y)}(x) \, d\mut (x)\, dy  \,.
\end{align*}
 If $B_{r}(y) \cap \big(\S^d\setminus B_{R}(z)\big) \neq \emptyset $,
 then $y \in  \S^d\setminus B_{R-r}(z)$. 
 Therefore, the integral over $x$ is majorized by
 \begin{align*}
   \int _{\S^d\setminus B_{R}(z)} \, 1_{B_{r}
   (y)}(x)  \, d\mut (x) &\leq 1_{\S^d\setminus B_{R-r}(z)}(y) \int _{\sd } \, 1_{B_{r}
   (y)}(x)  \, d\mut (x) \\
&= 1_{\S^d\setminus B_{R-r}(z)}(y)\, \mut (B_{r} (y))  \, .
 \end{align*}
Inserting in $I\!I_{r,R}$  and applying Theorem~\ref{mutmeas} to estimate
$\mut (B_{r} (y))$, we have now for any given  $\nu\in(0,1)$, such that $r\leq R\leq \pi$, 
\begin{align*}
  I\!I_{r,R}(z) &\leq \int_{\S^d} |f^t(y,z)|^2  1_{\S^d\setminus B_{R-r}(z)}(y)\, \mut (B_{r} (y))\, dy\\
&\leq \frac{A}{\nu ^2 \dimp } \int
  _{\S^d\setminus B_{R-r}(z)} |f^t(y,z)|^2 \, dy \, .
\end{align*}
This concludes the proof.
\end{proof}
To verify Theorem \ref{main}, we now suitably estimate the integral 
$\int_{\S^d\setminus B_{R-r}(z)} |f^t_z(x)|^2 \, dx$ by the off-diagonal decay estimates in Lemma \ref{lemma:asymptotics Darboux}.

\begin{proof}[Proof of Theorem \ref{main}]
The proof comes down to determining $r$ and $R$ such that the requirement
\begin{equation}\label{eq:bound on IIR}
\frac{I\!I_{r,R}}{|B_{r}|} \leq  \frac{a}{\dimp}(1-\epsilon),
\end{equation}
holds, because  Lemma \ref{lemma:first step} then  implies 
\begin{equation*}
 \mu_t\big(B_R(z)\big) \,\geq \, \frac{a}{\dimp} -\frac{I\!I_{r,R}(z)}{ |B_{r}|}\geq \epsilon \frac{a}{\dimp}\,.
\end{equation*}

To verify \eqref{eq:bound on IIR}, we use  Lemma \ref{lemma:switch
  int} and 
the off-diagonal decay \eqref{eq:oc7} of the kernel.  
    For $0<R-r<\pi$ and $r^2 =  (1-\nu)\frac{(d+2)}{t(t+d)}$, we have 
\begin{equation*}
\int_{\S^d\setminus B_{R-r}(z)} \big|f^t_z(x)\big|^2\,dx
 \leq \,\frac{M_d\,\Theta_d \, \Omega_{d,t}}{R-r}\, ,
\end{equation*}
and Lemma \ref{lemma:switch int} leads to
\begin{equation*}
I\!I_{r,R} \leq \frac{A}{\nu ^2\,\dimp} \,\frac{ M_d\, \Theta_d \,\Omega_{d,t}}{R-r}\,.
\end{equation*}

By taking into account $|B_r(x)| \geq \Theta_d \frac{\big(1-\tfrac{r^2}{6}\big)^{d-1}}{d}\,r^d$ as derived in \eqref{eq:low Br}, choose $R$ according to the stronger requirement 
\begin{equation*}
\frac{I\!I_{r,R}}{|B_{r}|} \leq \frac{A}{\nu ^2\,\dimp} \,\frac{ M_d\,\Theta_d \, \Omega_{d,t}}{R-r}\, \frac{d}{\Theta_d \big(1-\tfrac{r^2}{6}\big)^{d-1}\,r^d} \,  \leq \frac{a(1-\epsilon)}{\dimp}\,.
\end{equation*}
This simplifies to 
\begin{equation*}
\frac{A\,M_d\, \Omega_{d,t}\,d}{\nu ^2
  \,(R-r)\,\big(1-\tfrac{r^2}{6}\big)^{d-1}\,r^d} \,  \leq
a(1-\epsilon) \, ,
\end{equation*}
and solving for  $R-r$ yields
\begin{equation}\label{eq:RR}
\frac{A\,d\,M_d}{\nu ^2 \,a(1-\epsilon)\,\big(1-\tfrac{r^2}{6}\big)^{d-1}} \,\frac{\Omega_{d,t}}{r^d}  \leq R-r\,. 
\end{equation}
As already argued above,  \eqref{eq:RR} implies $\mut \big( B_{R} (z) )
    \geq \frac{a\,\epsilon}{\dimp}$. 
    
It remains to simplify the left-hand side of \eqref{eq:RR},  to remove
the dependence on $t$, and to find an explicit  
constant by a suitable choice of $\nu $ and $r$.

\subsubsection*{Determine $\nu$, $r$, $(t\,r)^d$, and estimate  $\big(1-\tfrac{r^2}{6}\big)^{d-1}$:\\}
For $t\geq d+1$, we choose $\nu$ according to $(1-\nu)=\frac{1}{2}(1+d/t)$. The constraint $t\geq 2d$ then leads to
\begin{equation*}
\nu^2 = \Big(1-\tfrac{1}{2}(1+d/t)\Big)^2 \geq \Big(\frac{1}{4} \Big)^2 = 2^{-4}\,.
\end{equation*}
For this choice of $\nu$, the radius $r$ is 
\begin{equation*}
r=\frac{1}{t}\sqrt{(1-\nu)\frac{d+2}{1+d/t}}=\frac{1}{t}\sqrt{\frac{d+2}{2}}\,.
\end{equation*}
Hence, we observe $(t\,r)^d = \big(\frac{d}{2}+1\big)^{d/2}$. 

To get a lower bound on $\big(1-\tfrac{r^2}{6}\big)^{d-1}$, we use $t\geq d+2$ and majorize $r^2$ by 
\begin{equation*}
r^2=\frac{1}{t^2} \,\frac{d+2}{2} \leq \frac{1}{2(d+2)}\leq \frac{1}{2d}\,.
\end{equation*}
Hence, we have 
\begin{equation*}
\big(1-\tfrac{r^2}{6}\big)^{d-1} \geq \big(1-\tfrac{1}{12 d}\big)^{d-1} \geq e^{-\frac{1}{12}}\,.
\end{equation*}

\subsubsection*{Estimate $\frac{\Omega_{d,t}}{r^d}$:\\}
We now consider $\frac{\Omega_{d,t}}{r^d}=\frac{\Omega_{d,t}t^d}{(t\,r)^d}$, where $\Omega_{d,t}$ is given in \eqref{eq:Omegad}. 
To estimate the numerator  $\Omega_{d,t}t^d$, we use $\dimp\geq \frac{2}{d!}t^d$ from \eqref{eq:estimate dimp} and obtain
\begin{equation*}
\Omega_{d,t}\,t^d  = \frac{2^{{d+1}}}{\binom{d}{\frac{d}{2}}}\,\frac{1}{(t+\frac{d}{2})}\,\frac{t^d}{\dimp} 
\leq  \frac{2^{{d}} \,\Gamma(\frac{d}{2}+1)^2}{(t+\frac{d}{2})}\,.
\end{equation*}
By combining the above with $(t\,r)^d = \big(\frac{d}{2}+1\big)^{d/2}$, we have verified that 
\begin{equation*}
\frac{\Omega_{d,t}}{r^d}=\frac{\Omega_{d,t}t^d}{(t\,r)^d} \leq \frac{2^{d} \,\Gamma(\tfrac{d}{2}+1)^2}{\big(\frac{d}{2}+1\big)^{d/2}}\,\frac{1}{t+\frac{d}{2}}\,.
\end{equation*}

\subsubsection*{Collect all estimates:\\}
We now collect all the estimates for the terms in \eqref{eq:RR}, use
$\frac{1}{t+\frac{d}{2}}\leq \frac{1}{t}$, and choose $R$ so that 
\begin{equation*}
\left(\frac{A\,e^{\frac{1}{12}}\,d\,M_d}{a(1-\epsilon) } \,\frac{2^{d+4} \,\Gamma(\tfrac{d}{2}+1)^2}{\big(\frac{d}{2}+1\big)^{d/2}} \,+\,\sqrt{\frac{d+2}{2}}\right)\frac{1}{t}\leq R\,.
\end{equation*}
Since the summand $\sqrt{\frac{d+2}{2}}$ is dominated by the term on its left, we simply absorb it by replacing the factor $2^{d+4}$ with $2^{d+5}$. 
We now could choose $R\geq \frac{\widetilde{V}_d}{t}$, where 
\begin{equation*}
\widetilde{V}_d = \frac{A\,e^{\frac{1}{12}}\,d\,M_d}{a(1-\epsilon)} \,\frac{2^{d+5} \,\Gamma(\tfrac{d}{2}+1)^2}{\big(\frac{d}{2}+1\big)^{d/2}}\,.
\end{equation*}
For the further elaboration of the constants, we use Stirling's
formula for the Gamma function. 

\subsubsection*{Stirling's estimate for $\frac{\Gamma(\frac{d}{2}+1)^2}{(\frac{d}{2}+1)^{d/2}}$:\\}
The constant $V_d$ in the theorem results from majorizing  $\frac{\Gamma(\frac{d}{2}+1)^2}{(\frac{d}{2}+1)^{d/2}}$.
Evaluating Stirling with remainder, 
\begin{equation*}
\Gamma(x+1)\leq \sqrt{2\pi x}\Big(\frac{x}{e}\Big)^{x} e^{\frac{1}{12x}}\,,\quad x>0\,,
\end{equation*}
at $x=d/2$ leads to 
\begin{align*}
\frac{\Gamma(\tfrac{d}{2}+1)^2}{\big(\frac{d}{2}+1\big)^{d/2}} & \leq \pi d \, \frac{\big(\frac{d}{2}\big)^{\frac{d}{2}}\, \big(\frac{d}{2}\big)^{\frac{d}{2}}e^{-d} e^{\frac{1}{3d}}}{\big(\frac{d}{2}+1\big)^{d/2}}\\
& \leq \pi d \, 2^{-\frac{d}{2}} (\sqrt{d}/e)^d\, e^{\frac{1}{3d}}\,.
\end{align*}
We substitute this expression  into $\widetilde{V}_d$ and obtain
\begin{align*}
\widetilde{V}_d 
& \leq \frac{A/a}{(1-\epsilon)} \,\pi \,M_d\,d^2 \,2^{\frac{d}{2}+5}\, e^{\frac{1}{12}+\frac{1}{3d}}\,(\sqrt{d}/e)^{d}\,.
\end{align*}
Since $e^{\frac{1}{12}+\frac{1}{3d}}\leq 2$, the right-hand side is bounded by $V_d$ in \eqref{eq:Vd}.

\smallskip
The auxiliary assumptions $t \geq d+2$ and $t \geq 2d$ made through the above derivations can be discarded since the lower volume estimate is only nontrivial for $R < \pi$. In this case we have $\pi t \geq V_d$, and this condition is already much stronger than the auxiliary ones. This completes the proof.
\end{proof}

 \bibliographystyle{abbrv}

\end{document}